\documentclass[a4paper, 12pt]{article}

\usepackage[a4paper]{geometry}
\geometry{verbose,a4paper,tmargin=30mm,bmargin=34mm,lmargin=30mm,rmargin=30mm}
\usepackage[latin1]{inputenc}% Eingabekodierung, statt latin1 auch utf8 möglich
\usepackage[T1]{fontenc}% Schriftkodierung, wichtig!

\usepackage{graphicx} % einbinden von Grafiken mit \includegraphics
\usepackage{float}

\usepackage{xpatch,amsthm} % um Theoreme zu definieren, nur mit amsthm werden Kommentare in Klammern neben Theorem nicht fett
\usepackage{amsfonts} % für mathbb etc.
\usepackage[intlimits]{amsmath} % z.B. um Matheoperatoren zu definieren

%Kommentare in Klammern neben Theoremen etc. fett machen
\makeatletter 
   \xpatchcmd{\@thm}{\fontseries\mddefault\upshape}{}{}{} % same font as thm-header
\makeatother

\usepackage{IEEEtrantools} %für IEEEeqnarray\\

\usepackage{fancyhdr}
\usepackage{tikz,pgfplots}
%\usetikzlibrary{external}
%\tikzexternalize[prefix=meinetikzbilder/]
\tikzset{>=stealth}

%Fussnoten durchgehend nummerieren
\usepackage{chngcntr}
%\counterwithout{footnote}{chapter}

\usepackage{wrapfig}
\usepackage[font={small,it},label font={bf,up}, margin=5mm]{caption}
\usepackage{sidecap}
\usepackage{subfig}

\newcommand{\N}{\mathbb{N}}    % Natuerliche Zahlen
    % Ganze Zahlen
    % Rationale Zahlen
\newcommand{\R}{\mathbb{R}}    % Reelle Zahlen
    % Komplexe Zahlen

\newcommand {\apgt} {\ {\raise-.5ex\hbox{$\buildrel >\over\sim$}}\ }
\newcommand {\aplt} {\ {\raise-.5ex\hbox{$\buildrel<\over\sim$}}\ }

\newtheorem{theorem}{Theorem}\numberwithin{theorem}{section} 
\newtheorem{lemma}[theorem]{Lemma}

\newtheorem{definition}[theorem]{Definition}
\newtheorem{assumption}[theorem]{Assumption}
\newtheorem{corollary}[theorem]{Corollary}
\newtheorem{remark}[theorem]{Remark}

\newtheorem{notation}[theorem]{Notation}

\newtheoremstyle{break}   % Name
  {}                   % Space above
  {}                   % Space below
  {\itshape}              % Body font
  {0pt}                   % Indent amount
  {\bfseries}             % Theorem head font
  {.}                     % Punctuation after theorem head
  {\newline}              % Space after theorem head
  {}                      % Theorem head spec (can be left empty, meaning 'normal')

\theoremstyle{break}

\renewenvironment{proof}{{\bfseries Proof.}}{\qed}

\DeclareMathOperator*{\esssup}{ess\,sup}
\DeclareMathOperator*{\essinf}{ess\,inf}
\DeclareMathOperator*{\supp}{supp}
\DeclareMathOperator*{\diam}{diam}
\DeclareMathOperator*{\dist}{dist}

\DeclareMathOperator*{\Span}{span}
\DeclareMathOperator*{\Div}{div}
\DeclareMathOperator*{\interior}{int}
\DeclareMathOperator*{\argmax}{arg\,max}
\DeclareMathOperator*{\lb}{lb}
\DeclareMathOperator*{\vol}{vol}

\parindent0pt

%Kopfzeile und Seitenzahl entfernen bei leeren Seiten am Ende von Kapiteln
\makeatletter
\def\cleardoublepage{\clearpage\if@twoside \ifodd\c@page\else\hbox{}
\thispagestyle{empty}
\newpage
\if@twocolumn\hbox{}\newpage\fi\fi\fi}
\makeatother

\usepackage[refpage]{nomencl}
\usepackage{xstring}

\begin{document}

\title{Adaptive Local (AL) Basis for Elliptic Problems with $L^\infty$-Coefficients}
\author{M. Weymuth\thanks{Institut f\"{u}r Mathematik, Universit\"{a}t Z\"{u}rich,
Winterthurerstrasse 190, CH-8057 Z\"{u}rich, Switzerland
}}
\maketitle

\begin{abstract}
\parindent0pt
We define a generalized finite element method for the discretization of elliptic partial
differential equations in heterogeneous media. In \cite{Sauter2012} a method has been introduced to set up an adaptive local finite element basis (AL basis) on a coarse mesh with mesh size $H$ which, typically, does not resolve the matrix of the media while the textbook finite element convergence rates are preserved. This method requires $O(\log(\frac{1}{H})^{d+1})$ basis functions per mesh point where $d$ denotes the spatial dimension of the computational domain. Since the continuous differential operator is involved in the construction, the method presented in \cite{Sauter2012} is only semidiscrete.
In this paper we present a fully discrete version of the method, where the AL basis is constructed by solving finite-dimensional localized problems.\\

\textbf{Keywords:} discontinuous coefficient, elliptic problem, heterogeneous media, generalized finite element method\\
\textbf{AMS subject classification} 35R05, 65N12, 65N15, 65N30
\end{abstract}

\section{Introduction}
We consider second order elliptic partial differential equations with heterogeneous and highly varying (non-periodic) coefficients. 
Our emphasis is on the efficient numerical solution of problems whose coefficients contain a large number of different scales which we allow to be highly non-uniformly distributed over the domain. It is well-known that for such problems standard single scale numerical methods such as conventional finite element methods perform arbitrarily badly (see e.g. \cite{Babuska2000}).\\
Essentially there are two approaches to overcome this difficulty.  One is to design (non-polynomial) generalized finite element methods where the characteristic behaviour of the solution is incorporated in the shape of the basis functions. Early papers on this topic are \cite{Babuska1983, Babuska1994} which have been further developed e.g. in \cite{Hou1997, Hughes1998}. The second approach tries to simplify the coefficient by some approximation and then employs standard finite elements. Standard methods for simplifying the coefficients are based, e.g., on homogenization methods for periodic structures (see e.g., \cite{Jikov1994, Cioranescu1999, Bensoussan1978}), or on different upscaling techniques e.g. \cite{E2007, Repin2012}. In this paper we follow the first approach.\\
Many of the existing numerical methods belonging to the first approach show promising results in practice. However, their convergence analysis usually relies on certain structural assumptions on the coefficient (e.g. periodicity or scale separation).\\
In \cite{Babuska} a generalized finite element method for general $L^\infty$-coefficient is presented where the local finite element spaces are constructed via the solution of local eigenvalue problems. This approach is based on a partition of unity method (PUM, see e.g. \cite{Babuska1997, Melenk1995, Babuska1996}) and is closely related to our approach. Further approaches for the construction and analysis of a multiscale basis for problems with high contrast without structural assumptions on the coefficient include \cite{Larson2007, Ohwadi2011, Malqvist2011, Sauter2012}.\\
In \cite{Sauter2012} a generalized finite element space has been set up as the span of the adaptive local (AL) basis. It has been proved that on a regular finite element mesh with, possibly coarse, mesh size $H$ the number $p$ of basis functions per nodal point satisfies $p=O((\log\frac{1}{H})^{d+1})$. Moreover all basis functions have local support and the accuracy of the arising Galerkin finite element method with respect to the energy norm is of order $O(H)$ without any structural assumptions on the coefficient.\\
However, the method introduced in \cite{Sauter2012} is only semidiscrete since the inverse of the continuous solution operator $L$ is involved in the construction of the basis functions. In \cite{Weymuth2013}  this operator is replaced by a discrete operator $L_h$ which is obtained by a Galerkin discretization with a conforming finite-dimensional space $V_h$ on a sufficiently fine mesh. It is shown that the error estimates are preserved if the space $V_h$ satisfies the approximation property
\begin{equation*}
\sup_{f\in L^2(\Omega)\backslash\{0\}}\inf_{v\in V_h}\frac{\|L^{-1}f-v\|_{H^1(\Omega)}}{\|f\|_{L^2(\Omega)}}\leq C_{apx}H,
\end{equation*}
where $H$ denotes the coarse mesh width and the constant $C_{apx}$ is independent of $H$ and $f$. The operator $L_h^{-1}$ is a non-local fine-scale operator and the evaluation of its inverse is prohibitively expensive from the numerical point of view. In this paper we want to develop a localized version of the fully discrete method presented in \cite{Weymuth2013}.\\
The paper is structured as follows. In Section \ref{sec:problem} we formulate the model problem as well as the conditions on the coefficient. Section \ref{sec:method} is devoted to define the localized AL basis. In Section \ref{sec:regularity} we derive some $W^{1,p}$-regularity results for our model problem. These results are used in the error analysis. Finally in Section \ref{sec:error_analysis} the error analysis is presented.

\section{Model Problem}\label{sec:problem}
Let $\Omega\subset \R^d$, $d\in\{2,3\}$, be a bounded domain with $\partial\Omega\in C^1$. Let $\langle\cdot,\cdot\rangle$ denote the usual Euclidean scalar product on $\R^d$. The Sobolev space of real-valued functions in $L^2\left( \Omega\right)$ with gradients in $L^2(\Omega)$ and vanishing boundary trace is denoted by $H_0^1(\Omega)$ and its norm by $\|\cdot\|_{H^1(\Omega)}$.

We consider the following problem in variational form: Given $f\in L^{2}(\Omega)$, we are seeking $u\in H_0^1(\Omega)$ such that
\begin{equation}\label{prob_weak}
a(u,v):=\int\limits_\Omega\! \langle A\nabla u,\nabla v\rangle=\int_\Omega\! fv=: F(v)\quad\quad \forall\, v\in H_0^1(\Omega).
\end{equation}
The diffusion matrix $A\in L^{\infty}\left(\Omega,\R^{d\times d}_{sym}\right)$ is assumed to be uniformly elliptic, i.e.
\begin{equation}\label{coeff}
\begin{split}
0<\alpha(A,\Omega):=\essinf\limits_{x\in\Omega}\inf\limits_{v\in\R^d\backslash\{0\}}\frac{\langle A(x)v,v\rangle}{\langle v,v\rangle}\\
\infty>\beta(A,\Omega):=\esssup\limits_{x\in\Omega}\sup\limits_{v\in\R^d\backslash\{0\}}\frac{\langle A(x)v,v\rangle}{\langle v,v\rangle}.
\end{split}
\end{equation}
Since the bilinear form $a$ is symmetric, bounded and coercive, problem \eqref{prob_weak} has a unique solution.\\

We will discretize equation \eqref{prob_weak} with a conforming finite element method. For this let $\mathcal{G}$ be a conforming finite element mesh in the sense of Ciarlet \cite{Ciarlet} consisting of closed simplices $\tau$ which are the images of the reference element $\hat{\tau}$, i.e.\ the reference triangle (in 2d) or the reference tetrahedron (in 3d), under the element map $F_\tau\colon\hat{\tau}\to\tau$. We assume -- as is standard -- that the element maps of elements sharing an edge or a face induce the same parametrization on that edge or face. 
 Additionally, the element maps $F_\tau\colon \hat{\tau}\to\tau$ satisfy the following assumption.

\begin{assumption}\label{ass_quasi-uniform regular triangulation}
Each element map $F_\tau$ can be written as $F_\tau=R_\tau\circ A_\tau$, where $A_\tau$ is an affine map (corresponding to the scaling $\diam \tau$ of the simplex $\tau$) and $R_\tau$ is an analytic map which corresponds to the metric distortion at the possibly curved boundary and is independent of $\diam \tau$. Let $\tilde{\tau}:=A_\tau(\hat{\tau})$. The maps $R_\tau$ and $A_\tau$ satisfy for shape regularity constants $C_{affine}$, $C_{metric}$, $\gamma>0$ independent of $\diam \tau$:
\begin{align*}
&\|A_\tau'\|_{L^\infty(\hat{\tau})}\leq C_{affine}\diam \tau, && \|(A_\tau')^{-1}\|_{L^\infty(\tilde{\tau})}\leq C_{affine}(\diam \tau)^{-1}\\
&\|(R_\tau')^{-1}\|_{L^\infty(\tau)}\leq C_{metric}, && \|\nabla^nR_\tau\|_{L^\infty(\tilde{\tau})}\leq C_{metric}\gamma^nn!\quad \forall\, n\in\N_0.
\end{align*}
\end{assumption} 

The space of continuous, piecewise linear finite elements for the mesh $\mathcal{G}$ is given by
\begin{equation*}
S:=\left\{u\in H_0^1(\Omega) :~  u|_{\tau}\circ F_\tau \in \mathbb{P}_1\ \forall\, \tau \in \mathcal{G}\right\},
\end{equation*}
where $\mathbb{P}_1$ is the space of polynomials of degree $\leq 1$. Furthermore,
let $(b_i)_{i=1}^N$ denote the usual local nodal basis of $S$ (``hat functions''), i.e.\ $b_i(x_j)=\delta_{ij}$. We denote their support  by
\begin{equation*}
\omega_i:=\supp b_i.
\end{equation*}
Since $S\subset H_0^1(\Omega)$ is a finite-dimensional subspace, the abstract conforming Galerkin method to problem \eqref{prob_weak} can be formulated as: Find $u_S\in S$ such that
\begin{equation}\label{Galerkin problem}
a(u_S,v)=F(v)\quad \forall\, v\in S
\end{equation}
with $a(\cdot,\cdot)$ and $F(\cdot)$ as in \eqref{prob_weak}.

If the diffusion coefficient $A$, the right-hand side $f$ as well as the domain $\Omega$ of  \eqref{prob_weak} are sufficiently smooth such that the problem is  $H^2$-regular, then the unique solution $u_S$ of \eqref{Galerkin problem} satisfies the error estimate
\begin{equation*}
\|u-u_S\|_{H^1(\Omega)}\leq CH\|f\|_{L^2(\Omega)}
\end{equation*}
(see e.g. \cite{Ciarlet}). This estimate states linear convergence of the $\mathbb{P}_1$-finite element method as the mesh width $H$ tends to zero. However, the regularity assumption is not realistic for the problem class under consideration. It is well known that as long as the mesh $\mathcal{G}$ does not resolve the discontinuities and oscillations of $A$, the convergence rates of linear finite element methods are substantially reduced.\\

\section{The Adaptive Local (AL) Basis}\label{sec:method}
In this section we introduce a new generalized finite element method for the discretization of heterogeneous problems.

\subsection{Notation}
We assume that $\mathcal{G}$ is a conforming finite element mesh which is shape-regular and satisfies Assumption \ref{ass_quasi-uniform regular triangulation}. Moreover we suppose that the simplices $\tau\in\mathcal{G}$ are closed sets.

\begin{itemize}
\item [1)] Simplex layers around $\omega_i$ and corresponding meshes:\\
We define recursively
\begin{equation}\label{layers}
\begin{split}
\omega_{i,0}&:=\omega_i\\
\omega_{i,j+1}&:=\bigcup\left\{\tau :~ \tau\in\mathcal{G}  \ \text{and}\  \omega_{i,j}\cap \tau\neq\emptyset\right\},\quad j=0,1,2,\dots
\end{split}
\end{equation}

Finally, we set
\begin{equation*}
\mathcal{G}_{i,j}:=\left\{\tau\in\mathcal{G} :~ \tau\subset\omega_{i,j}\right\}.
\end{equation*}

\item [2)] Local neighbourhoods around the triangle patch $\omega_{i,1}$:\\
We set
\begin{equation}\label{omega_far}
\mathcal{G}_i^{far}:=\mathcal{G}_{i,2}\backslash\mathcal{G}_{i,1}\quad \text{and}\quad \omega_i^{far}:=\interior(\omega_{i,2}\backslash\omega_{i,1}).
\end{equation}

\item [3)] (Local) mesh width:\\
We set
\begin{equation}\label{local_mesh_width}
H_i:=\max_{\tau\in\mathcal{G}_{i,2}} \diam(\tau)\qquad\text{and}\qquad H:=\max_{1\leq i\leq N}H_i.
\end{equation}
Since the mesh is assumed to be shape-regular and the number of layers is bounded by 2, we can conclude that there exist positive constants $c$, $C$ and $C_\#$ such that
\begin{align}\label{diam_leq_CH}
&\min_{\tau\in\mathcal{G}_{i,2}}\rho_\tau\geq cH_i
&&\diam \omega_{i,2}\leq CH_i\\
&\dist(\omega_{i,1},\partial\omega_{i,2}\backslash\partial\Omega)=\delta_i\geq cH_i
&&\#\mathcal{G}_{i,2}\leq C_\#\nonumber
\end{align}

holds.
$\rho_\tau$ denotes the diameter of the maximal inscribed ball in $\tau$.
\item [4)] Refinement operator:\\ Let $\mathcal{T}^{macro}$ be a fixed triangulation (with possibly curved elements at the boundary) with element maps satisfying Assumption \ref{ass_quasi-uniform regular triangulation}. We introduce a refinement operator $\mathcal{R}^1(\cdot)$. The input is a conforming finite element mesh $\mathcal{T}$ where every element is marked for refinement and the output is a new conforming finite element mesh $\mathcal{R}^1(\mathcal{T})$. Recursively we define for $t\geq 2$ the iterated refinement operator 
\begin{equation}\label{refinement_op}\mathcal{R}^t(\mathcal{T}):=\mathcal{R}^1(\mathcal{R}^{t-1}(\mathcal{T})).
\end{equation}
\item [5)] Solution operator:\\
For a subdomain $D\subseteq\Omega$, let $L_D^{-1}\colon L^{2}(D)\to H_0^1(D)$ denote the solution operator associated with the (localized) variational form: Given $g\in L^{2}(D)$, find $w \in H_0^1(D)$ such that
\begin{equation}\label{local_sol_operator}
a_D(w,v):=\int_D\!\langle A\nabla w,\nabla v\rangle=\int_D\! gv=:G(v)\quad \forall\, v\in H_0^1(D).
\end{equation}
\end{itemize}

\begin{remark}\label{rem_overlap}
Note that the patches $\omega_{i,j}$, $0\leq j\leq 2$, have finite overlap. For every $\tau\in\mathcal{G}$ there exists $m_{\tau,j}\in\N$ such that
\begin{equation*}
\#\{i :~ \tau\in \omega_{i,j}\}= m_{\tau,j},\quad 0\leq j\leq 2.
\end{equation*}
We set
\begin{equation}\label{overlap_const}
M_j:=\max_\tau m_{\tau,j},\quad 0\leq j\leq 2.
\end{equation}
\end{remark}

\subsection{Construction of the Local Approximation Spaces}

On each patch $\omega_{i,2}$ ($1\leq i\leq N$) we will set up two low-dimensional local approximation spaces called $V_i^{near}$ and $V_i^{far}$. In order to get $V_i^{far}$ we will first construct an intermediate space $X_i^{far}$ which is the high-dimensional space of locally $L$-harmonic functions and can be approximated by a low-dimensional space.\\

We fix $i\in\mathcal{I}:=\{1,\dots,N\}$. The construction of the space $V_i^{near}$ respectively $X_i^{far}$ goes as follows.
We set
\begin{equation}\label{piecewise_constant}
S_0(\mathcal{G}):=\Span\left\{\chi_\tau :~ \tau\in\mathcal{G}\right\},
\end{equation}
where $\chi_\tau\colon \Omega\to\R$ is the characteristic function for the simplex $\tau\in\mathcal{G}$ and $H$ is the global mesh width. $S_0(\mathcal{G})$ is the space of piecewise constant functions on $\mathcal{G}$. Furthermore we define the space
\begin{equation}\label{S_i2h}
S_{i,2}:=\{u|_{\omega_{i,2}} :~ u\in S_{fine}\wedge \supp u\subset\omega_{i,2}\},
\end{equation}
where $S_{fine}$ is some finite-dimensional fine-scale space satisfying
\begin{equation*}
S_{fine}\subset H^1(\Omega).
\end{equation*}

The local approximation spaces are constructed by solving conventional finite element problems.  For the nearfield part, i.e.\ $\tau\in\mathcal{G}_{i,1}$, we want to find $\tilde{B}_{i,\tau}^{near}\in S_{i,2}$ such that
\begin{equation}\label{nearfield_sol}
\int_{\omega_{i,2}}\!\langle A\nabla\tilde{B}_{i,\tau}^{near},\nabla v\rangle=\int_{\omega_{i,2}}\!\chi_\tau v\quad\forall\, v\in S_{i,2}.
\end{equation}
Then we set
\begin{equation*}
B_{i,\tau}^{near}:=b_i\tilde{B}_{i,\tau}^{near}
\end{equation*}
and finally our local approximation space for the nearfield part can be defined as
\begin{equation*}
V_i^{near}:=\Span\{B_{i,\tau}^{near} :~ \tau\in\mathcal{G}_{i,1}\}.
\end{equation*}
The construction of the local approximation space for the farfield part can be done analogously, but the error analysis shows that for preserving the linear convergence rate of the method we have to refine the mesh $\mathcal{G}_i^{far}$. Thus for $\tau\in\mathcal{R}^t(\mathcal{G}_i^{far})$ we are seeking $\tilde{B}_{i,\tau}^{far}\in S_{i,2}$ such that
\begin{equation*}
\int_{\omega_{i,2}}\!\langle A\nabla \tilde{B}_{i,\tau}^{far},\nabla v\rangle=\int_{\omega_{i,2}}\!\chi_\tau v\quad \forall\, v\in S_{i,2}.
\end{equation*}
The error analysis will show that the refinement parameter $t$ has to be chosen as $t=\lceil\lb\frac{1}{H_i}\rceil$. We set
\begin{equation*}
X_i^{far}:=\Span\{\tilde{B}_{i,\tau}^{far}|_{\omega_{i,1}} :~ \tau\in\mathcal{R}^t(\mathcal{G}_i^{far})\}.
\end{equation*}

\begin{remark}~
\begin{itemize}
\item[a)] In order to get a linear convergence rate in the $H^1$-norm the space $S_{fine}$ in \eqref{S_i2h} has to be chosen such that
\begin{equation*}
\sup\limits_{f\in L^{2}(\omega_{i,2})\backslash\{0\}}\inf\limits_{v\in S_{i,2}}\frac{\left\|L_{\omega_{i,2}}^{-1}f-v\right\|_{H^1(\omega_{i,2})}}{\left\|f\right\|_{L^2(\omega_{i,2})}}\leq C_{apx}H_i^2
\end{equation*}
holds, where  the constant $C_{apx}$ is independent of $H_i$ and $f$.
\item[b)] The functions in $X_i^{far}$ are locally $L$-harmonic on $\interior(\omega_{i,1})$, i.e.\ any $v\in X_i^{far}$ satisfies
\begin{equation*}
\int_{\omega_{i,1}}\!\langle A\nabla v,\nabla w\rangle=0\quad \forall\, w\in S_{i,1}:=\{w|_{\omega_{i,1}} :~ w\in S_{fine}\wedge \supp w\subset\omega_{i,1}\}.
\end{equation*}
\end{itemize}
\end{remark}

\subsection{Approximation of $\boldsymbol{X_i^{far}}$}

Our goal is to approximate the space $X_i^{far}$ by a low-dimensional space $V_i^{far}$. The construction of this approximation is based on results in \cite{Bebendorf, Boerm, Sauter2012}. \\

Let $\omega_i$, $\omega_{i,1}$ as in \eqref{layers} and assume that $\omega_i\cap\partial\Omega=\emptyset$.
We introduce intermediate layers between $\omega_i$ and $\omega_{i,1}$. Therefore we set $r_{i,1}:=\dist(\omega_i,\partial\omega_{i,1})$ and
\begin{equation}\label{def_rij}
r_{i,j}:=\left(1-\frac{j-1}{\ell-1}\right)r_{i,1},\quad\quad 2\leq j\leq \ell,
\end{equation}
where $\ell$ will be fixed later. It holds $r_{i,1}>r_{i,2}>\dots>r_{i,\ell}=0$. The intermediate layers are given by
\begin{align*}
D_{i,0}&:= \omega_{i,1}\\
D_{i,j}&:=\left\{x\in\omega_{i,1} :~ \dist(x,\omega_i)\leq r_{i,j}\right\},\quad\quad 1\leq j\leq \ell,
\end{align*}
and satisfy $\omega_i=D_{i,\ell}\subset D_{i,\ell -1}\subset\cdots \subset D_{i,1} \subset D_{i,0}= \omega_{i,1}$. Note that if $\omega_i$ and $\omega_{i,1}$ are convex, then also  the domains $D_{i,j}$ are convex for all $0\leq j\leq\ell$. In \cite{Bebendorf} it is shown that for any $\kappa_j\in\N$ there exists a subspace $V_{\kappa_j}\subset X(D_{i,j})$ such that $\dim V_{\kappa_j}\leq \kappa_j$ and the estimate
\begin{equation}\label{eq_harmonic}
\inf_{v\in V_{\kappa_j}}\|u-v\|_{L^2(D_{i,j})}\leq C\frac{\diam(D_{i,j})}{\sqrt[d]{\kappa_j}}\|\nabla u\|_{L^2(D_{i,j})}
\end{equation} 
is satisfied.\footnote{$X(D_{i,j})$ denotes the space of locally harmonic functions on $D_{i,j}$. Note that the constant $C$ in \eqref{eq_harmonic} depends on Poincar\'{e}'s constant and hence on the shape of $D_{i,j}$. If $D_{i,j}$ is convex, then $C=2\sqrt[d]{2}/\pi$ (cf. \cite{Bebendorf}).} In order to construct these subspaces $V_{\kappa_j}=:\tilde{V}_{i,j}^{far}$ for $1\leq j\leq \ell $ we use $L^2$-orthogonal projections onto $X_i^{far}$.

 We set $\kappa_j=:k^d$, where $k\in\N$ will be fixed later. 
 For $\rho>0$ let $\mathcal{G}_\rho$ denote a Cartesian tensor mesh on $\R^d$, $d\in\{2,3\}$, which consists of $d$-dimensional elements with side length $\rho$. Then define
\begin{equation*}
\mathcal{\tilde{G}}_{i,j}:=\left\{D_{i,j}\cap\tau :~ \tau\in\mathcal{G}_\rho\ \text{with}\ \rho:=\frac{\diam(D_{i,j})}{k}\right\},\quad 1\leq j\leq \ell
\end{equation*}
 and

\begin{equation*}
\tilde{V}_{i,j}^{far}:=\Span \left\{(\mathcal{P}_i\chi_t)|_{\omega_i} :~ t\in \mathcal{\tilde{G}}_{i,j}\right\},
\end{equation*}
where $\mathcal{P}_i\colon L^2(\omega_{i,2})\to X_i^{far}$ is the $L^2$-orthogonal projection. We set

\begin{equation}\label{tilde_V_far}
\tilde{V}_i^{far}:=\tilde{V}_{i,1}^{far}+\tilde{V}_{i,2}^{far}+\cdots +\tilde{V}_{i,\ell}^{far}
\end{equation}
and finally,

\begin{equation*}
V_i^{far}:=\left\{b_iv :~ v\in \tilde{V}_i^{far}\right\}.
\end{equation*}

\begin{remark}
If $\omega_i\cap\partial\Omega\neq\emptyset$ we have to make the following small modifications. We set $r_{i,1}:=\dist(\omega_i,\partial\omega_{i,1}\backslash\partial\Omega)$ and $r_{i,j}$ is defined as in \eqref{def_rij}. The intermediate layers are given by
\begin{align*}
D_{i,0}&:= \omega_{i,1}\cup\{x\in\R^d :~  \dist(x,\omega_i)\leq r_{i,1}\}\\
D_{i,j}&:=\left\{x\in\R^d :~ \dist(x,\omega_i)\leq r_{i,j}\right\},\quad\quad 1\leq j\leq \ell.
\end{align*}
The remaining part of the construction is exactly the same as above.
\end{remark}

\subsection{Definition of the AL Basis}
\begin{remark}\label{remdim}
Since $b_i\in W_0^{1,\infty}(\omega_i)$ and $X_i^{far}\subset H^1(\omega_{i,1})$ we conclude that $b_iv\in H_0^1(\omega_{i})$ for all $v\in \tilde{V}_i^{far}$. Thus we can identify $b_iv$ by its extension by  zero to a function (again denoted by $b_iv$) in $H_0^1(\Omega)$. In this sense we have
\begin{equation*}
V_i^{far}\subset H_0^1(\Omega),\quad\quad \dim V_i^{far}\leq  \sum\limits_{j=1}^{\ell} \#\mathcal{\tilde{G}}_{i,j}\leq \sum\limits_{j=1}^{\ell} k^d=\ell k^d.
\end{equation*}
\end{remark}

\begin{definition}[AL basis]
For any support $\omega_i$ the set of \emph{AL basis} functions consists of 
\begin{equation*}
V_i^{near}:=\Span\left\{b_i\tilde{B}_{i,\tau}^{near} :~ \tau\in\mathcal{G}_{i,1}\right\}
\end{equation*}
where $\tilde{B}_{i,\tau}^{near}$ is the solution of problem \eqref{nearfield_sol} and of
\begin{equation*}
V_i^{far}:=\left\{b_iv :~ v\in \tilde{V}_i^{far}\right\}.
\end{equation*}
The general notation is $b_{i,j}$, $1\leq j\leq s_i$, $1\leq i\leq N$, where $s_i:=\dim(V_i^{near}+V_i^{far})$.
The corresponding generalized finite element space $V_{AL}$ is given by
\begin{equation}\label{def:VAL-II}
V_{AL}:=\left(V_1^{near}+V_1^{far}\right)+\left(V_2^{near}+V_2^{far}\right)+\dots +\left(V_N^{near}+V_N^{far}\right).
\end{equation}
\end{definition}

The Galerkin discretization for the generalized finite element space $V_{AL}$ is given by seeking $u_{AL}^{GAL}\in V_{AL}$ such that
\begin{equation}\label{Galerkin_sol_II}
a(u_{AL}^{GAL}, v)=F(v)\quad \forall\, v\in V_{AL}.
\end{equation}

Problem \eqref{Galerkin_sol_II} has a unique solution and  is equivalent to a system of linear equations of the form
\begin{equation}\label{eq_linear_system}
\sum_{i=1}^N\sum_{j=1}^{s_i}a(b_{k,\ell},b_{i,j})c_{i,j}=F(b_{k,\ell}),\quad 1\leq \ell\leq s_i,\; 1\leq k\leq N
\end{equation} 
or 
\begin{equation*}
Bc=F
\end{equation*}
where $B$ is the stiffness matrix, whose elements are 
\begin{equation*}
B(\ell,k;j,i):=a(b_{k,\ell},b_{i,j})=\int_{\omega_i\cap\omega_k}\!\langle A\nabla b_{k,\ell},\nabla b_{i,j}\rangle
\end{equation*}
and $F$ is the load vector which is defined as 
\begin{equation*}
F(k;\ell):=\int_{\omega_k}\!f b_{k,\ell}. 
\end{equation*}
If $c:=\{c_{i,j}\}$ is a solution of \eqref{eq_linear_system}, then $u_{AL}^{GAL}$ can be written as 
\begin{equation*}
u_{AL}^{GAL}=\sum_{i=1}^N\sum_{j=1}^{s_i}c_{i,j}b_{i,j}.
\end{equation*}

\section{$\boldsymbol{W^{1,p}}$-Regularity of the Poisson Problem with $\boldsymbol{L^\infty}$-Coefficient}\label{sec:regularity}

Let $u$ be the solution of \eqref{prob_weak}. Our goal is to derive $L^p(\Omega)$-regularity estimates for the gradient of $u$
for some $p>2$. We start from a Laplace problem, i.e. the coefficient $A$ is equal to the identity matrix and employ then a perturbation argument in order to get the desired estimates for a uniformly elliptic diffusion matrix $A\in L^\infty(\Omega,\R_{sym}^{d\times d})$. We will see that our estimates only depend on the size of the jumps in the coefficient. We consider the following problem: Find  $w\in H_0^1(\Omega)$ such that
\begin{equation}\label{laplace}
\int_\Omega\!\langle\nabla w,\nabla v\rangle=F(v)\quad\forall\, v\in H_0^1(\Omega).
\end{equation} 

\begin{theorem}[\cite{Simader1996}]\label{theo_Simader}
Let $\Omega\subset\R^d$, $d\geq2$, be a bounded
domain with $\partial\Omega\in C^1$. Let $1<p<\infty$. Then, for every $F\in W^{-1,p}(\Omega)$, problem \eqref{laplace} has a unique solution $w\in W_0^{1,p}(\Omega)$ which satisfies
\begin{equation*}
K_p^{-1}\|\nabla w\|_{L^p(\Omega)} \leq\| F\|_{W^{-1,p}(\Omega)}\leq \|\nabla w\|_{L^p(\Omega)} 
\end{equation*}
with the Laplace $W^{1,p}$-regularity constant $K_{p}$ and
\begin{equation*}
\|F\|_{W^{-1,p}(\Omega)}:=\sup_{\substack{v\in W_0^{1,p'}(\Omega)\\ \|v\|_{W^{1,p'}(\Omega)}\leq 1}}\left|\int_\Omega\langle\nabla w,\nabla v\rangle\right|.
\end{equation*}
\end{theorem}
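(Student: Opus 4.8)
The inequality $\|F\|_{W^{-1,p}(\Omega)}\le\|\nabla w\|_{L^p(\Omega)}$ is immediate: for $v\in W_0^{1,p'}(\Omega)$ with $\|v\|_{W^{1,p'}(\Omega)}\le1$ one has $|F(v)|=\big|\int_\Omega\langle\nabla w,\nabla v\rangle\big|\le\|\nabla w\|_{L^p(\Omega)}\|\nabla v\|_{L^{p'}(\Omega)}\le\|\nabla w\|_{L^p(\Omega)}$ by Hölder. The whole content is therefore the converse a priori bound $\|\nabla w\|_{L^p(\Omega)}\le K_p\|F\|_{W^{-1,p}(\Omega)}$ together with existence and uniqueness in $W_0^{1,p}(\Omega)$. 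The plan is to first establish the a priori bound for the (unique) $L^2$-solution, under enough extra smoothness of the data that $w$ automatically lies in $W_0^{1,p}(\Omega)$, and then to upgrade this to existence and uniqueness for arbitrary $F\in W^{-1,p}(\Omega)$ by density and duality. Throughout one uses that any $F\in W^{-1,p}(\Omega)$ can be represented as $F(v)=\int_\Omega\langle g,\nabla v\rangle$ with $g\in L^p(\Omega)^d$ and $\|g\|_{L^p(\Omega)}$ comparable to $\|F\|_{W^{-1,p}(\Omega)}$.

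For the a priori estimate I would argue by localization, starting from two model cases. On $\Omega=\R^d$, writing $w$ through the Newtonian potential, each $\partial_jw$ is obtained from $g$ by the second-order kernel $\partial_j\partial_kE$, which is of Calderón--Zygmund type; hence $\|\nabla w\|_{L^p(\R^d)}\le C_p\|g\|_{L^p(\R^d)}$ for all $1<p<\infty$. On the half-space $\R^d_+$ with homogeneous Dirichlet data the same bound follows by odd/even reflection across the boundary hyperplane. For a bounded domain with $\partial\Omega\in C^1$ one fixes a finite partition of unity $\{\zeta_k\}$ subordinate to a cover of $\overline\Omega$ by coordinate patches, straightens the boundary in each patch by a $C^1$-diffeomorphism, and transforms the form $\int\nabla w\cdot\nabla v$; because $\partial\Omega$ is only $C^1$ one obtains $\int\langle B\nabla\tilde w,\nabla\tilde v\rangle$ with $B$ merely continuous, symmetric positive definite and $B=I$ at the centre of the patch, so on a sufficiently small patch $\|B-I\|_{L^\infty}$ is small and the perturbation is absorbed into the half-space estimate by a Neumann-series argument. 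Summing the local estimates and collecting the first-order commutator terms $[\zeta_k,\Delta]w$ on the right gives a global bound $\|\nabla w\|_{L^p(\Omega)}\le C\big(\|F\|_{W^{-1,p}(\Omega)}+\|w\|_{L^p(\Omega)}\big)$.

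The term $\|w\|_{L^p(\Omega)}$ is then removed by a compactness--uniqueness argument: the commutators are first order in $w$, so the associated map into $W^{-1,p}(\Omega)$ is compact on $W_0^{1,p}(\Omega)$ by Rellich--Kondrachov, and a normalized sequence for which the right-hand side tends to zero would (after passing to a subsequence convergent in $L^p$) produce a nontrivial solution of the homogeneous problem, contradicting uniqueness of the $L^2$-solution; this yields $\|\nabla w\|_{L^p(\Omega)}\le K_p\|F\|_{W^{-1,p}(\Omega)}$. Existence for general $F\in W^{-1,p}(\Omega)$ then follows by approximating $F$ by data for which a classical (hence $W_0^{1,p}(\Omega)$) solution exists and passing to the limit with the aid of this estimate. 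Uniqueness for $p\ge2$ is inherited from the $L^2$-theory since $W_0^{1,p}(\Omega)\subset W_0^{1,2}(\Omega)$ on the bounded domain $\Omega$; for $1<p<2$ it follows by duality, testing a homogeneous $W_0^{1,p}$-solution against solutions of $-\Delta\phi=\psi$, $\psi\in C_c^\infty(\Omega)$, which lie in $W_0^{1,p'}(\Omega)$ by the already established regularity at the exponent $p'>2$.

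The main obstacle in this programme is the boundary analysis for a domain whose boundary is only $C^1$: one cannot quote classical Schauder or Agmon--Douglis--Nirenberg $L^p$-estimates, which require $C^{1,\alpha}$ or smoother boundaries, and must instead exploit the uniform continuity of the unit normal to keep $\|B-I\|_{L^\infty}$ small on small patches and thereby make the perturbation argument work. Turning this, together with the compactness--uniqueness step, into a clean statement valid uniformly over the whole range $1<p<\infty$ is precisely the technical heart of \cite{Simader1996}, from which this theorem is quoted.
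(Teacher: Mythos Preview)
The paper does not prove this theorem at all; it is stated as a citation from \cite{Simader1996} and used as a black box in the subsequent perturbation argument (Theorem~\ref{theo_gradient_estimate}). Your proposal goes well beyond what the paper does: you sketch a standard route to the $W^{1,p}$-regularity of the Dirichlet Laplacian on $C^1$-domains via Calder\'on--Zygmund estimates on $\R^d$ and the half-space, localization and flattening with a small-perturbation/Neumann-series argument near the boundary, and a compactness--uniqueness step to remove the lower-order term. This outline is essentially correct and is indeed close in spirit to how such results are obtained in the literature, and you yourself correctly identify at the end that the theorem is quoted rather than proved here. There is therefore no discrepancy to flag: the paper offers no proof to compare against, and your sketch is a sound summary of the underlying argument.
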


\begin{remark}
The constant $K_p$ is independent of $F$ (and $w$) but depends on $\Omega$, $d$ and $p$. We have $K_2=1$ and, for $p>2$, $K_p$ is
non-decreasing and continuous in $p$ (cf. \cite{Meyers1963}).
\end{remark}

\begin{figure}[h]
\centering
\subfloat{\includegraphics[scale=0.6]{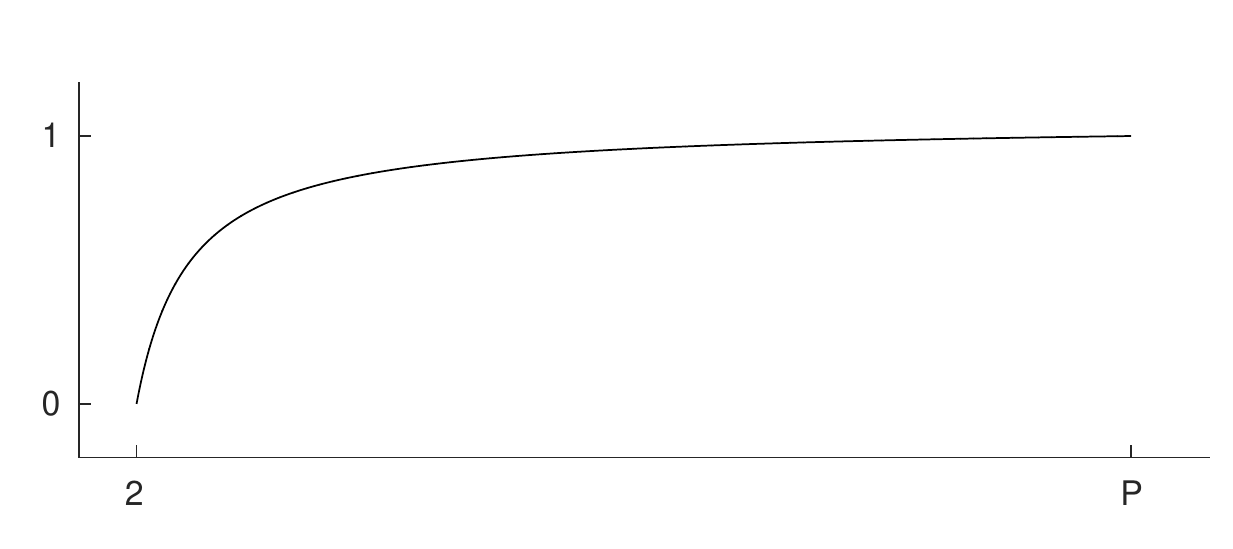}}
\subfloat{\includegraphics[scale=0.6]{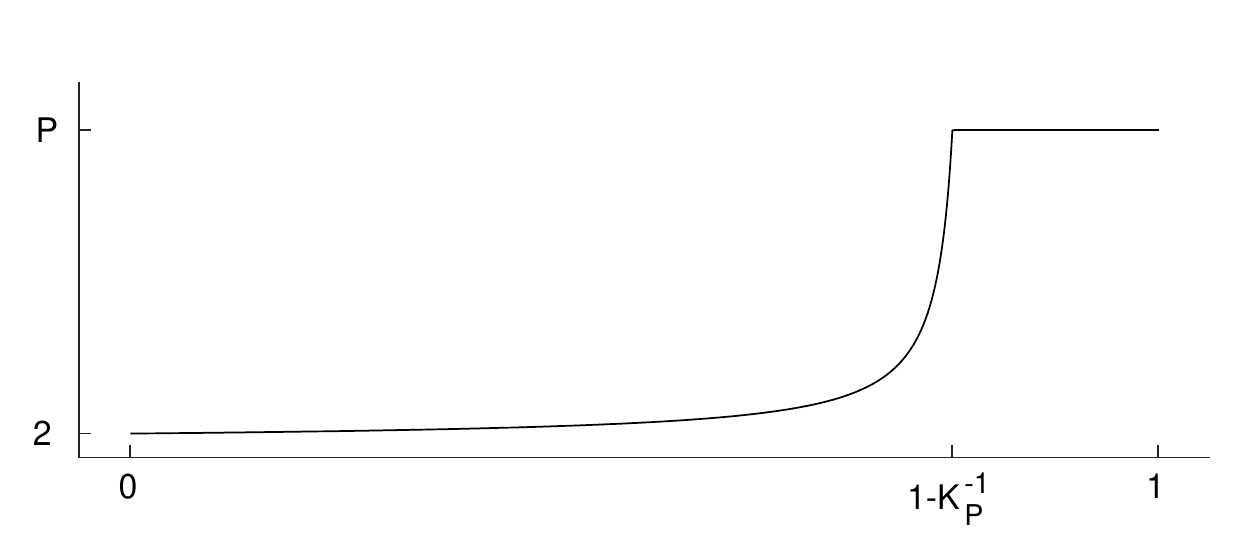}}
\caption{The function $\eta(p)$ (left) and the function $p^{*}(t)$ (right).}\label{fig:p_star}
\end{figure}

Let $2<P<\infty$ be fixed. We define
\begin{equation*}
\eta(p):=\frac{1/2-1/p}{1/2-1/P}\qquad 2\leq p\leq P.
\end{equation*}

It can be seen in Figure \ref{fig:p_star} that $\eta(p)$ increases from the value zero at $p=2$ to the value one at $p=P$. Furthermore for any $t\in[0,1]$, we set

\begin{equation}\label{qstern}
p^*(t):=\argmax\left\{K_P^{-\eta(p)}\geq 1-t :~ 2\leq p\leq P\right\}.
\end{equation}

The function $K_P^{-\eta(p)}$ decreases from the value $1$ at $p=2$ to the value $1/K_P$ at $p=P$. The function $p^*(t)$ takes the value $2$ at $t=0$, increases then to the value $P$ at $t=1-1/K_P$ and remains constant for $t\in[1-1/K_P,1]$ (see Figure \ref{fig:p_star}).

\begin{theorem}\label{theo_gradient_estimate}
Let $\Omega\subset\R^d$, $d\geq 2$, be a bounded domain and let $\partial\Omega\in C^1$. If $A\in L^\infty\left(\Omega,\R^{d\times d}_{sym}\right)$ satisfies \eqref{coeff} and $F\in W^{-1,P}(\Omega)$ for some $P>2$, then for the solution $u\in H_0^1(\Omega)$ of $\eqref{prob_weak}$ the estimate
\begin{equation*}
\|\nabla u\|_{L^p(\Omega)}\leq C\|F\|_{W^{-1,p}(\Omega)}
\end{equation*}
holds provided $2\leq p< p^*(\alpha/\beta)$ with $p^*$ as in \eqref{qstern} and $C:=\frac{1}{\beta}\frac{K_P^{\eta(p)}}{1-K_P^{\eta(p)}\left(1-\frac{\alpha}{\beta}\right)}$.
\end{theorem}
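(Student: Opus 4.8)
The plan is to obtain the $L^p$-gradient estimate for the variable-coefficient problem by a perturbation/Neumann-series argument anchored at the Laplacian, for which Theorem~\ref{theo_Simader} gives the sharp constant $K_p$. First I would rewrite \eqref{prob_weak} as a Laplace problem with a modified right-hand side: since $a(u,v)=\int_\Omega\langle A\nabla u,\nabla v\rangle = F(v)$, we have for every $v\in H_0^1(\Omega)$
\begin{equation*}
\int_\Omega\langle \beta\nabla u,\nabla v\rangle = F(v) + \int_\Omega\langle(\beta I - A)\nabla u,\nabla v\rangle,
\end{equation*}
i.e. $u$ solves a Laplace problem (scaled by $\beta$) with datum $\tilde F(v):=\beta^{-1}F(v)+\beta^{-1}\int_\Omega\langle(\beta I-A)\nabla u,\nabla v\rangle$. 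Theorem~\ref{theo_Simader} then gives $\|\nabla u\|_{L^p(\Omega)}\le K_p\|\tilde F\|_{W^{-1,p}(\Omega)}$ for any $1<p<\infty$.

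Second, I would estimate $\|\tilde F\|_{W^{-1,p}}$. The first term contributes $\beta^{-1}\|F\|_{W^{-1,p}(\Omega)}$. For the second, note $0\le \langle(\beta I - A)(x)v,v\rangle\le(\beta-\alpha)\langle v,v\rangle$ by \eqref{coeff}, so the pointwise operator norm of $\beta I - A(x)$ is at most $\beta-\alpha$; testing against $v\in W_0^{1,p'}(\Omega)$ with $\|v\|_{W^{1,p'}}\le1$ and applying H\"older gives
\begin{equation*}
\left|\int_\Omega\langle(\beta I-A)\nabla u,\nabla v\rangle\right|\le(\beta-\alpha)\|\nabla u\|_{L^p(\Omega)}\|\nabla v\|_{L^{p'}(\Omega)}\le(\beta-\alpha)\|\nabla u\|_{L^p(\Omega)}.
\end{equation*}
Combining, $\|\nabla u\|_{L^p(\Omega)}\le K_p\beta^{-1}\|F\|_{W^{-1,p}(\Omega)} + K_p\beta^{-1}(\beta-\alpha)\|\nabla u\|_{L^p(\Omega)} = K_p\beta^{-1}\|F\|_{W^{-1,p}(\Omega)} + K_p(1-\alpha/\beta)\|\nabla u\|_{L^p(\Omega)}$. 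Absorbing the last term requires $K_p(1-\alpha/\beta)<1$, and then
\begin{equation*}
\|\nabla u\|_{L^p(\Omega)}\le \frac{1}{\beta}\,\frac{K_p}{1-K_p(1-\alpha/\beta)}\,\|F\|_{W^{-1,p}(\Omega)}.
\end{equation*}

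Third, I would convert the condition $K_p(1-\alpha/\beta)<1$ into the stated range $p<p^*(\alpha/\beta)$ and replace $K_p$ by $K_P^{\eta(p)}$ in the constant. The key ingredient here is a Riesz--Thoren/interpolation bound $K_p\le K_P^{\eta(p)}$ for $2\le p\le P$: since the Laplace solution operator is bounded on $L^2$ with constant $K_2=1$ (the $W^{1,p}$ theory corresponds to interpolating the operator $F\mapsto\nabla w$ between the $p=2$ and $p=P$ endpoints, with interpolation parameter $\theta$ determined by $1/p=(1-\theta)/2+\theta/P$, which is exactly $\theta=\eta(p)$), the logarithmic convexity of operator norms yields $K_p\le K_2^{1-\eta(p)}K_P^{\eta(p)}=K_P^{\eta(p)}$. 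With this, $K_p(1-\alpha/\beta)\le K_P^{\eta(p)}(1-\alpha/\beta)<1$ is guaranteed precisely when $K_P^{-\eta(p)}>1-\alpha/\beta$, i.e. (by monotonicity of $K_P^{-\eta(p)}$ in $p$ and the definition \eqref{qstern}) when $p<p^*(\alpha/\beta)$; and the final constant follows by using $K_p\le K_P^{\eta(p)}$ in both numerator and denominator (the denominator bound $1-K_p(1-\alpha/\beta)\ge 1-K_P^{\eta(p)}(1-\alpha/\beta)$ going the right way since the map $x\mapsto 1/(1-x)$ is increasing). The main obstacle is the interpolation step: one must justify that the abstract Stein/Riesz--Thorin interpolation applies to the (nonlinear-looking but actually linear) map $F\mapsto\nabla w$ with the correct identification of the interpolation exponent as $\eta(p)$, and handle the continuity/monotonicity of $K_p$ (already recorded in the remark after Theorem~\ref{theo_Simader}) so that $p^*$ is well-defined and the strict inequality at $p<p^*$ is genuinely strict.
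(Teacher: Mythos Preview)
Your argument is precisely the Meyers perturbation argument that the paper cites (it gives no proof of its own, referring instead to \cite{Meyers1963, Nochetto2013, Weymuth2016}): rewrite the variable-coefficient equation as a Laplace equation with source $\beta^{-1}F + \beta^{-1}\Div((\beta I-A)\nabla u)$, invoke the $W^{1,p}$ Laplace estimate of Theorem~\ref{theo_Simader}, absorb the perturbation, and then replace $K_p$ by $K_P^{\eta(p)}$ via Riesz--Thorin interpolation between $p=2$ and $p=P$. Your handling of the constant and of the range $p<p^*(\alpha/\beta)$ is correct.

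One genuine gap remains: to absorb the term $K_p(1-\alpha/\beta)\|\nabla u\|_{L^p(\Omega)}$ on the right you must already know that $\|\nabla u\|_{L^p(\Omega)}<\infty$, and the hypotheses only give $u\in H_0^1(\Omega)$. This is the standard bootstrap issue in Meyers' proof and is resolved by a continuity (method of homotopy) argument: set $A_s:=(1-s)\beta I + sA$ for $s\in[0,1]$, observe that $A_0=\beta I$ gives the Laplacian (where Theorem~\ref{theo_Simader} furnishes $\nabla u_0\in L^p$ directly), that the ellipticity ratio $\alpha_s/\beta_s\ge \alpha/\beta$ is uniform in $s$, and that the set of $s$ for which the $L^p$ estimate holds with the asserted constant is both open and closed in $[0,1]$. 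You should at least flag this step; without it the absorption inequality is formally circular. Your identification of the interpolation exponent $\theta=\eta(p)$ is fine, since $[W^{-1,2}(\Omega),W^{-1,P}(\Omega)]_\theta=W^{-1,p}(\Omega)$ and $[L^2(\Omega),L^P(\Omega)]_\theta=L^p(\Omega)$ on a bounded $C^1$ domain.
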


For a proof we refer to \cite{Meyers1963, Nochetto2013, Weymuth2016}.

\begin{remark}
Let $P\in (2,\infty)$ be fixed and $K_P$ as in Theorem \ref{theo_Simader}. If the coefficient $A$ is such that $\alpha/\beta\in[1-1/K_P,1]$ and $F\in W^{-1,P}(\Omega)$, then the solution of \eqref{prob_weak} satisfies the estimate 
\begin{equation*}
\|\nabla u\|_{L^p(\Omega)}\leq C\|F\|_{W^{-1,p}(\Omega)},\quad C=\frac{1}{\beta}\frac{K_P^{\eta(p)}}{1-K_P^{\eta(p)}\left(1-\frac{\alpha}{\beta}\right)}
\end{equation*}
 for $2\leq p< P=p^*(\alpha/\beta)$ with $p^*$ as in \eqref{qstern}. This is due to the fact that the function $p^*$ takes the value $P$ at $1-1/K_P$ and remains constant in the interval $[1-1/K_P,1]$ (cf.\ Figure \ref{fig:p_star}).
 \end{remark}
 
Note that for a given coefficient $A\in L^\infty\left(\Omega,\R^{d\times d}_{sym}\right)$ one can always determine a $P>2$ such that $\alpha/\beta\in [1-1/K_P, 1]$. The $P$ depends only on the size of the jumps in the coefficient. For constant coefficients $P$ can be chosen arbitrarily close to infinity, whereas for coefficients with large  jumps $P$ is close to 2.

\section{Error Analysis}\label{sec:error_analysis}
This section analyzes the generalized finite element method which has been introduced in Section \ref{sec:method}. It is based on results in \cite{Bebendorf, Boerm, Sauter2012}.\\
 
The norm in $L^p(\Omega)$ will be denoted by $\|\cdot\|_{L^p(\Omega)}$. We always use the notation that, for $p\in\left[1,\infty\right]$, the number $p'\in\left[ 1,\infty\right] $ is defined via $\frac{1}{p}+\frac{1}{p'}=1$. Further we will need the Sobolev
space $W^{1,p}(\Omega)$ consisting of functions in $L^p\left(\Omega\right)$ with gradients in $L^p(\Omega)$. Its standard norm is
denoted by $\|\cdot\|_{W^{1,p}(\Omega)}$. The space of functions denoted by $W_0^{1,p}(\Omega)$ is the closure of $C_0^\infty(\Omega)$ with respect to the norm $\|\cdot\|_{W^{1,p}(\Omega)}$. We also use the space $W^{-1,p}\left(\Omega\right):=(W_0^{1,p'}\left(\Omega\right))'$ endowed with the standard dual norm $\|\cdot\|
_{W^{-1,p}(\Omega)}$. For vector and matrix valued functions, we use the same notation for the Lebesgue and Sobolev spaces as well as for the corresponding norms. For functions in $L^{2}\left(  \Omega,\mathbb{R}^{d}\right)  $
we set
\begin{equation*}
\|\cdot\|_{L^p(\Omega}):=\|\,\|\cdot\|_{\ell^{p}}\,\|_{L^p(\Omega)},
\end{equation*}
where $\|\cdot\|_{\ell^p}$ denotes the discrete $\ell^p$-norm in $\R^d$.\\

For the error analysis it is supposed that the following assumption holds.
\begin{assumption}\label{ass}
\begin{equation*}
\sup\limits_{f\in L^{2}(\omega_{i,2})\backslash\{0\}}\inf\limits_{v\in S_{i,2}}\frac{\left\|L_{\omega_{i,2}}^{-1}f-v\right\|_{H^1(\omega_{i,2})}}{\left\|f\right\|_{L^2(\omega_{i,2})}}\leq C_{apx}H_i^2,
\end{equation*}
where $S_{i,2}$ is as defined in \eqref{S_i2h}, $H_i$ is the mesh width of  $\mathcal{G}_{i,2}$ (cf.\ \eqref{omega_far} and \eqref{local_mesh_width}) and the constant $C_{apx}$ is independent of $H_i$ and $f$.
\end{assumption}

\begin{notation}
Let $\tilde{L}_{\omega_{i,2}}^{-1}: L^2(\interior(\omega_{i,2}))\to S_{i,2}$ denote the discrete local solution operator: Given $g\in L^2(\interior(\omega_{i,2}))$ find $\tilde{B}_{i,\tau}\in S_{i,2}$ such that
\begin{equation*}
\int_{\omega_{i,2}}\!\langle A\nabla\tilde{B}_{i,\tau},\nabla v\rangle=\int_{\omega_{i,2}}\!g v\quad\forall\, v\in S_{i,2}.
\end{equation*}
\end{notation}

\begin{corollary}
C\'ea's lemma and Assumption \ref{ass} imply
\begin{equation}\label{assumption}
\left\|L_{\omega_{i,2}}^{-1}f-\tilde{L}_{\omega_{i,2}}^{-1}f\right\|_{H^1(\omega_{i,2})}\leq \frac{\beta}{\alpha }CH_i^2\left\|f\right\|_{L^2(\omega_{i,2})},
\end{equation}
where $\alpha,\,\beta$ are the constants from \eqref{coeff}. $C$ depends on $C_{apx}$ and on Friedrichs' constant.
\end{corollary}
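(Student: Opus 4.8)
The plan is to combine Céa's lemma for the exact local solution operator with the approximation property of Assumption \ref{ass}, and then pass from the energy norm to the $H^1$-norm via Friedrichs' inequality on $\omega_{i,2}$. Write $w := L_{\omega_{i,2}}^{-1}f$ for the exact local solution and $\tilde w := \tilde L_{\omega_{i,2}}^{-1}f$ for its Galerkin approximation in $S_{i,2}$. Both solve the variational problem \eqref{local_sol_operator} with right-hand side $f$, the first tested against all of $H_0^1(\omega_{i,2})$, the second only against $S_{i,2}\subset H_0^1(\omega_{i,2})$; hence Galerkin orthogonality $a_{\omega_{i,2}}(w-\tilde w,v)=0$ holds for all $v\in S_{i,2}$.

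First I would invoke Céa's lemma in the energy norm: using coercivity and boundedness of $a_{\omega_{i,2}}$ with constants $\alpha,\beta$ from \eqref{coeff}, for any $v\in S_{i,2}$ one has
\begin{equation*}
\alpha\,\|\nabla(w-\tilde w)\|_{L^2(\omega_{i,2})}^2\leq a_{\omega_{i,2}}(w-\tilde w,w-\tilde w)=a_{\omega_{i,2}}(w-\tilde w,w-v)\leq \beta\,\|\nabla(w-\tilde w)\|_{L^2(\omega_{i,2})}\,\|\nabla(w-v)\|_{L^2(\omega_{i,2})},
\end{equation*}
so that $\|\nabla(w-\tilde w)\|_{L^2(\omega_{i,2})}\leq \tfrac{\beta}{\alpha}\inf_{v\in S_{i,2}}\|\nabla(w-v)\|_{L^2(\omega_{i,2})}\leq \tfrac{\beta}{\alpha}\inf_{v\in S_{i,2}}\|w-v\|_{H^1(\omega_{i,2})}$. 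Next I would upgrade the full $H^1$-norm on the left by Friedrichs' inequality on $\omega_{i,2}$ (legitimate since $w-\tilde w\in H_0^1(\omega_{i,2})$), obtaining $\|w-\tilde w\|_{H^1(\omega_{i,2})}\leq C_F\|\nabla(w-\tilde w)\|_{L^2(\omega_{i,2})}$ for a Friedrichs constant $C_F$ depending on $\omega_{i,2}$. Combining the two inequalities gives $\|w-\tilde w\|_{H^1(\omega_{i,2})}\leq C_F\tfrac{\beta}{\alpha}\inf_{v\in S_{i,2}}\|w-v\|_{H^1(\omega_{i,2})}$.

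Finally I would apply Assumption \ref{ass} directly: the infimum on the right is exactly $\inf_{v\in S_{i,2}}\|L_{\omega_{i,2}}^{-1}f-v\|_{H^1(\omega_{i,2})}$, which by the assumption is bounded by $C_{apx}H_i^2\|f\|_{L^2(\omega_{i,2})}$. Absorbing $C_F$ and $C_{apx}$ into a single constant $C$ yields the claimed bound $\|L_{\omega_{i,2}}^{-1}f-\tilde L_{\omega_{i,2}}^{-1}f\|_{H^1(\omega_{i,2})}\leq \tfrac{\beta}{\alpha}C H_i^2\|f\|_{L^2(\omega_{i,2})}$, with $C$ depending on $C_{apx}$ and on Friedrichs' constant, exactly as stated. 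There is essentially no obstacle here — this is a textbook Céa-plus-Friedrichs argument; the only point requiring a word of care is that the Friedrichs constant on $\omega_{i,2}$ should be controlled uniformly in $i$ (e.g. scaling like $\diam\omega_{i,2}\leq CH_i$ by \eqref{diam_leq_CH}), but since the statement only claims dependence "on Friedrichs' constant" without tracking its $H_i$-scaling, even that subtlety can be deferred.
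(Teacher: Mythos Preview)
Your proof is correct and follows essentially the same route as the paper: C\'ea's lemma gives the $H^1$-quasi-optimality with constant $\tfrac{\beta}{\alpha}C$ (the paper states this in one line, you unpack it via the energy-norm C\'ea estimate plus Friedrichs' inequality), and then Assumption~\ref{ass} bounds the best-approximation error by $C_{apx}H_i^2\|f\|_{L^2(\omega_{i,2})}$. The only cosmetic difference is that the paper passes through a superfluous $\sup_f$ before applying the assumption, whereas you apply it directly to the given $f$; your version is slightly cleaner.
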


\begin{proof}
By C\'ea's lemma we get
\begin{equation*}
\left\|L_{\omega_{i,2}}^{-1}f-\tilde{L}_{\omega_{i,2}}^{-1}f\right\|_{H^1(\omega_{i,2})}\leq \frac{\beta}{\alpha} C \inf_{v\in S_{i,2}}\left\|L_{\omega_{i,2}}^{-1}f-v\right\|_{H^1(\omega_{i,2})}.
\end{equation*}
Assumption \ref{ass} implies
\begin{align*}
\left\|L_{\omega_{i,2}}^{-1}f-\tilde{L}_{\omega_{i,2}}^{-1}f\right\|_{H^1(\omega_{i,2})}&\leq \frac{\beta}{\alpha}C\sup_{f\in L^2(\omega_{i,2})\backslash\{0\}} \inf_{v\in S_{i,2}}\left\|L_{\omega_{i,2}}^{-1}f-v\right\|_{H^1(\omega_{i,2})}\\
&\leq \frac{\beta}{\alpha}C H_i^2\|f\|_{L^2(\omega_{i,2})}
\end{align*}
with a constant $C$ depending on Friedrichs' constant and $C_{apx}$.
\end{proof}

\begin{remark}
The ellipticity of $L_{\omega_{i,2}}^{-1}$, the assumption \eqref{coeff} on the coefficient $A$, and the conformity of the finite element space $S_{i,2}$ imply that the approximation $\tilde{L}_{\omega_{i,2}}^{-1}$ is elliptic and
\begin{equation}\label{L}
\left\|\tilde{L}_{\omega_{i,2}}^{-1}\right\|_{H_0^1(\omega_{i,2})\leftarrow H^{-1}(\omega_{i,2})}\leq \frac{C}{\alpha},
\end{equation}
where $\alpha$ is defined in \eqref{coeff}.
\end{remark}

\begin{lemma}\label{lemma_projection}
Let $\mathcal{G}$ be a conforming finite element mesh which satisfies Assumption \ref{ass_quasi-uniform regular triangulation}. Further let $g_i\in L^2(\omega_{i,2})$ and denote by $P_i$ the $L^2$-orthogonal projection of $L^2(\omega_{i,2})$ onto $S_0(\mathcal{G})$ (cf.\ \eqref{piecewise_constant}). Then
\begin{equation*}
\|g_i-P_ig_i\|_{H^{-1}(\omega_{i,2})}\leq CH\|g_i\|_{L^2(\omega_{i,2})},
\end{equation*}
where $H$ denotes the mesh width of $\mathcal{G}$.
\end{lemma}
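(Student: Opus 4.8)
The plan is to prove the estimate by duality. Since $H^{-1}(\omega_{i,2})=(H_0^1(\omega_{i,2}))'$, one has
\begin{equation*}
\|g_i-P_ig_i\|_{H^{-1}(\omega_{i,2})}=\sup_{\substack{v\in H_0^1(\omega_{i,2})\\ \|v\|_{H^1(\omega_{i,2})}\le 1}}\int_{\omega_{i,2}}(g_i-P_ig_i)\,v,
\end{equation*}
so it suffices to bound the integral on the right by $CH\|g_i\|_{L^2(\omega_{i,2})}\|v\|_{H^1(\omega_{i,2})}$ for every admissible test function $v$.

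The key point is to exploit the $L^2$-orthogonality built into $P_i$. For a given $v$ I would introduce the elementwise mean $\Pi v\in S_0(\mathcal{G})$, i.e.\ the function that on each $\tau\in\mathcal{G}_{i,2}$ equals the average of $v$ over $\tau$ (note that restricting $S_0(\mathcal{G})$ to $\omega_{i,2}$ is exactly the span of the $\chi_\tau$ with $\tau\in\mathcal{G}_{i,2}$, since $\omega_{i,2}$ is a union of whole elements). Because $g_i-P_ig_i$ is $L^2(\omega_{i,2})$-orthogonal to each such $\chi_\tau$, hence to $\Pi v$, the integral above is unchanged when $v$ is replaced by $v-\Pi v$; a Cauchy--Schwarz step then gives
\begin{equation*}
\Big|\int_{\omega_{i,2}}(g_i-P_ig_i)\,v\Big|\le\|g_i-P_ig_i\|_{L^2(\omega_{i,2})}\,\|v-\Pi v\|_{L^2(\omega_{i,2})},
\end{equation*}
and the $L^2$-stability of the orthogonal projection $P_i$ yields $\|g_i-P_ig_i\|_{L^2(\omega_{i,2})}\le\|g_i\|_{L^2(\omega_{i,2})}$.

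It then remains to establish the local approximation bound $\|v-\Pi v\|_{L^2(\omega_{i,2})}\le CH\|\nabla v\|_{L^2(\omega_{i,2})}$. I would argue elementwise: on each $\tau\in\mathcal{G}_{i,2}$ transport to the reference simplex via the element map $F_\tau=R_\tau\circ A_\tau$, apply the Poincar\'e inequality for mean-zero functions on the fixed reference simplex, and pull back, using the bounds on $A_\tau'$, $(A_\tau')^{-1}$, $R_\tau'$, $(R_\tau')^{-1}$ in Assumption \ref{ass_quasi-uniform regular triangulation} to absorb the metric distortion into shape-regularity constants and to produce one factor $\diam\tau$. Since $\diam\tau\le CH_i\le CH$ for $\tau\in\mathcal{G}_{i,2}$ and $\#\mathcal{G}_{i,2}\le C_\#$ (cf.\ \eqref{diam_leq_CH}), summing the squared elementwise estimates over $\mathcal{G}_{i,2}$ gives the claim, and combining the three displays proves the lemma.

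I expect the only genuinely technical step to be the Poincar\'e/scaling estimate on the curved boundary elements, where one must check that the analytic perturbation $R_\tau$, controlled by $C_{metric}$ and $\gamma$, does not degrade the constant; on affine interior elements this is the classical Bramble--Hilbert argument and needs no comment. A minor point to verify is simply that $\Pi v$ is an admissible competitor, i.e.\ that $g_i-P_ig_i$ is indeed $L^2$-orthogonal to the piecewise constants on $\mathcal{G}_{i,2}$, which follows directly from the definition of $P_i$ as the $L^2(\omega_{i,2})$-projection onto $S_0(\mathcal{G})$.
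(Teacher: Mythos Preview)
Your argument is correct, but the paper takes a shorter path. Rather than exploiting the $L^2$-orthogonality of $g_i-P_ig_i$ to piecewise constants and invoking an elementwise Poincar\'e estimate for $v-\Pi v$, the paper simply bounds $|\langle g_i-P_ig_i,\psi_i\rangle|\le\|g_i-P_ig_i\|_{L^2(\omega_{i,2})}\|\psi_i\|_{L^2(\omega_{i,2})}$ by Cauchy--Schwarz and then applies Friedrichs' inequality directly on the small domain $\omega_{i,2}$, whose diameter is $\le CH$ by \eqref{diam_leq_CH}; combined with $\|g_i-P_ig_i\|_{L^2}\le\|g_i\|_{L^2}$ this already gives the result. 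In the paper's argument the projection $P_i$ therefore plays no real role: the same bound would hold for $\|g_i\|_{H^{-1}(\omega_{i,2})}$ itself. Your approach is slightly longer but has the advantage that it genuinely picks up the \emph{element} diameter rather than the diameter of $\omega_{i,2}$; in particular, your proof would yield $\|g-P_i^tg\|_{H^{-1}(\omega_{i,2})}\le Ch_i\|g\|_{L^2(\omega_{i,2})}$ when $P_i^t$ projects onto piecewise constants on the refined mesh $\mathcal{R}^t(\mathcal{G}_i^{far})$ of width $h_i$ (as used later in Lemma~\ref{lemma_est_d_far}), whereas the Friedrichs argument on $\omega_{i,2}$ alone would only give $CH_i$ there.
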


\begin{proof}
Using H\"{o}lder's inequality and Friedrichs' inequality we get for $\psi_i\in H_0^1(\omega_{i,2})$
\begin{align}\label{proj_est}
|\langle g_i-P_ig_i, \psi_i\rangle_{L^2(\omega_{i,2})}|&\leq \|g_i-P_ig_i\|_{L^2(\omega_{i,2})}\|\psi_i\|_{L^2(\omega_{i,2})}\nonumber\\&\leq C H \|g_i\|_{L^2(\omega_{i,2})}\|\psi_i\|_{H^1(\omega_{i,2})}.
\end{align}
By the definition of the $H^{-1}$-norm and \eqref{proj_est} we obtain \belowdisplayskip=-12pt
\begin{equation*}
\|g_i-P_ig_i\|_{H^{-1}(\omega_{i,2})}=\sup_{v\in H_0^1(\omega_{i,2})}\frac{|\langle g_i-P_ig_i, v\rangle_{L^2(\omega_{i,2})}|}{\|v\|_{H^1(\omega_{i,2})}}\leq CH\|g_i\|_{L^2(\omega_{i,2})}.\qedhere
\end{equation*}
\end{proof}

\vspace{0.5cm} 

We fix some $Q\in (2,\infty)$. For $1\leq i\leq N$ let $\chi_i\colon \Omega\to\R$ be a cutoff function satisfying $\chi_i|_{\omega_{i,1}}=1$ and $\chi_i|_{\Omega\backslash\omega_{i,2}}=0$. Morover the following properties are fulfilled for $\frac{Q'}{3}< q< \frac{Q}{3}$.
\begin{align}
\|\chi_i\|_{L^q(\omega_i^{far})}&\leq CH_i^{\frac{d}{q}}\label{cutoff_estimate_1}\\
\|\nabla\chi_i\|_{L^q(\omega_i^{far})}&\leq CH_i^{\frac{d}{q}-1}\label{cutoff_estimate_2}\\
\|\Div(A\nabla\chi_i)\|_{L^q(\omega_i^{far})}&\leq CH_i^{\frac{d}{q}-2}\label{cutoff_estimate_3}
\end{align}

\begin{remark}
For the explicit construction of $\chi_i$ we refer to \cite{Weymuth2016}. The cutoff functions are constructed by solving homogeneous Dirichlet problems. It would be desirable to have $\chi_i\in W^{1,\infty}(\Omega)$ as well as $\Div(A\nabla\chi_i)\in L^\infty(\Omega)$. However, for $d\geq 2$ this is not possible.
\end{remark}
Let $f\in L^{p}(\Omega)$ ($p\in[2,\infty]$) be given and define $u:=L_\Omega^{-1}f$. 
We set $u_i:=\chi_i(u-\bar{u}_i)$ with
\begin{equation*}
\bar{u}_i:=\begin{cases}
\frac{1}{\vol(\omega_{i}^{far})}\int\limits_{\omega_i^{far}}\!u &\text{in}\ \omega_i^{far}\\
0 &\text{otherwise}.
\end{cases}
\end{equation*}
We observe that
\begin{equation*}
u_i=L_{\omega_{i,2}}^{-1}\left(g_i\right)
\end{equation*}
with
\begin{equation*}
g_i=\begin{cases}
f &\text{in}\ \omega_{i,1}\\
\chi_if-2\langle A\nabla \chi_i,\nabla u\rangle-(u-\bar{u}_i)\Div\left(A\nabla\chi_i\right) &\text{in}\ \omega_i^{far}.
\end{cases}
\end{equation*}

We set
\begin{equation}\label{g_i_near}
g_i^{near}:=\begin{cases}
f &\text{in}\ \omega_{i,1}\\
0 & \text{in}\ \omega_i^{far}
\end{cases}
\end{equation}
 and
\begin{equation}\label{g_i_far}
g_i^{far}:=\begin{cases} 0 &\text{in}\ \omega_{i,1}\\
\chi_if-2\langle A\nabla \chi_i,\nabla u\rangle-(u-\bar{u}_i)\Div\left(A\nabla\chi_i\right) &\text{in}\ \omega_i^{far}.
\end{cases}
\end{equation}

This allows us to introduce
\begin{equation}\label{splitting}
u_i=u_i^{near}+u_i^{far}:=L_{\omega_{i,2}}^{-1}\left(g_i^{near}\right)+L_{\omega_{i,2}}^{-1}(g_i^{far}).
\end{equation}
Define
\begin{equation}\label{approx_near_far}
\tilde{u}_i^{near}:=\tilde{L}_{\omega_{i,2}}^{-1}(P_ig_i^{near})\qquad \text{and}\qquad
\tilde{u}_i^{far}:=\tilde{L}_{\omega_{i,2}}^{-1}(P_i^tg_i^{far}),
\end{equation}
where $P_i$ denotes the $L^2$-orthogonal projection of $L^2(\omega_{i,1})$ onto $S_0(\mathcal{G}_{i,1})$ (cf.\ \eqref{piecewise_constant}) and $P_i^t$ is the $L^2$-orthogonal projection of $L^2(\omega_i^{far})$ onto  $S_0(\mathcal{R}^t(\mathcal{G}_i^{far}))$ which is the space of piecewise constant functions on the $t$-times refined mesh ($t$ will be fixed later).\\

The following lemma is a slight modification of a result presented in \cite{Boerm, Sauter2012}.

\begin{lemma}\label{lemma_aprox_tilde_u_far}
Let $\tilde{u}_i^{far}$ as in \eqref{approx_near_far} and $\tilde{V}_i^{far}$ as in \eqref{tilde_V_far}. There exists $\hat{u}_i^{far}\in\tilde{V}_i^{far}$ such that
\begin{equation*}
\|\tilde{u}_i^{far}-\hat{u}_i^{far}\|_{H^m(\omega_i)}\leq CH_i^{3-m}\|\nabla\tilde{u}_i^{far}\|_{L^2(\omega_{i,1})}\quad m=0,1
\end{equation*}
with $H_i$ as in \eqref{local_mesh_width}.
\end{lemma}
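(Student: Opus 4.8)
The plan is to establish the estimate by combining the approximation property of locally harmonic functions in \eqref{eq_harmonic} with the layered structure $\omega_i = D_{i,\ell}\subset\cdots\subset D_{i,0}=\omega_{i,1}$, iterating the one-step approximation across the $\ell$ nested domains, and finally choosing $\ell$ and $k$ appropriately to collapse the geometric decay into the claimed powers of $H_i$. First I would note that $\tilde u_i^{far}=\tilde L_{\omega_{i,2}}^{-1}(P_i^t g_i^{far})$ is, by construction, locally $A$-harmonic on $\interior(\omega_{i,1})$ (cf.\ Remark part (b)), since $P_i^t g_i^{far}$ vanishes on $\omega_{i,1}$; hence $\tilde u_i^{far}|_{\omega_{i,1}}\in X_i^{far}$ up to the discrete error, and the $L^2$-projection $\mathcal P_i$ onto $X_i^{far}$ fixes it. The spaces $\tilde V_{i,j}^{far}=\Span\{(\mathcal P_i\chi_t)|_{\omega_i}: t\in\tilde{\mathcal G}_{i,j}\}$ are precisely the images under $\mathcal P_i$ of the piecewise-constant interpolants on the mesh $\tilde{\mathcal G}_{i,j}$ of $D_{i,j}$, and these are designed to realize the Bebendorf--type bound \eqref{eq_harmonic} on each $D_{i,j}$ with $\kappa_j=k^d$.

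The core of the argument is a telescoping/iteration over the layers. On each annular region $D_{i,j-1}\setminus D_{i,j}$ the local mesh size is $\rho_j=\diam(D_{i,j})/k$, and the width of each layer is $r_{i,1}/(\ell-1)\sim H_i/\ell$ by \eqref{def_rij}. I would set $w_0:=\tilde u_i^{far}$ and inductively produce $w_j\in\tilde V_{i,1}^{far}+\cdots+\tilde V_{i,j}^{far}$ such that, on $D_{i,j}$,
\begin{equation*}
\|\tilde u_i^{far}-w_j\|_{L^2(D_{i,j})}\leq \Big(\frac{C\diam(D_{i,j})}{k}\Big)\|\nabla(\tilde u_i^{far}-w_{j-1})\|_{L^2(D_{i,j})},
\end{equation*}
using \eqref{eq_harmonic} on $D_{i,j}$ for the harmonic function $\tilde u_i^{far}-w_{j-1}$ (still harmonic, as a difference of harmonic functions). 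To convert the $L^2$-bound on $D_{i,j-1}$ into the $H^1$-bound on the smaller $D_{i,j}$ needed for the next step, I would invoke a Caccioppoli/interior-regularity inequality for $A$-harmonic functions: $\|\nabla v\|_{L^2(D_{i,j})}\leq C\,\dist(D_{i,j},\partial D_{i,j-1})^{-1}\|v\|_{L^2(D_{i,j-1})}\leq C(\ell/H_i)\|v\|_{L^2(D_{i,j-1})}$, with the constant depending only on $\beta/\alpha$ and shape regularity. Chaining these $\ell$ steps gives a factor $(C\diam(D_{i,j})/k)^\ell(\ell/H_i)^{\ell-1}\sim (C/k)^\ell \ell^{\ell-1}$ times $\|\nabla\tilde u_i^{far}\|_{L^2(\omega_{i,1})}$, with one leftover power of $H_i$; choosing $\ell\sim\log(1/H_i)$ and $k$ a fixed multiple (e.g.\ $k=\lceil Ce\ell\rceil$) makes the product $\leq C H_i^{2}$, and reconciling one residual length scale gives the stated $H_i^{3-m}$ for $m=0$. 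The case $m=1$ follows by the same Caccioppoli estimate applied once more to $\tilde u_i^{far}-\hat u_i^{far}$ on a slightly shrunk domain, trading one power of $H_i$ for the gradient, or alternatively by running the iteration with $H^1$-norms throughout.

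The main obstacle I expect is bookkeeping the constants so that the product of $\ell$ factors of order $C/k$ against $\ell$ factors of order $\ell/H_i$ genuinely telescopes to a fixed power of $H_i$ without hidden $\ell$-dependence blowing up: this forces the precise coupling $k\sim\ell\sim\log(1/H_i)$ and requires the constant $C$ in \eqref{eq_harmonic} (depending on the Poincar\'e constant of the convex $D_{i,j}$, hence uniformly bounded since the $D_{i,j}$ are convex with comparable shape) and the Caccioppoli constant (depending on $\beta/\alpha$) to be $H_i$-independent — this is where shape regularity \eqref{diam_leq_CH} and the convexity of the $D_{i,j}$ are essential. A secondary technical point is handling the discrete solution operator: $\tilde u_i^{far}$ is only \emph{discretely} $A$-harmonic on $\omega_{i,1}$, so strictly one should either work with $S_{i,1}$-test functions and a discrete Caccioppoli inequality, or absorb the (higher-order, by Assumption \ref{ass}) consistency error into the final estimate; I would take the latter route and note the discrepancy is of order $H_i^2\|g_i^{far}\|$, which is dominated by the right-hand side.
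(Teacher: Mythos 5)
Your plan is correct and reconstructs from first principles precisely the iterated Bebendorf--Caccioppoli argument over the nested layers $D_{i,j}$ that the paper's own proof imports wholesale from \cite{Boerm}: the paper simply substitutes $p\leftarrow\ell$, $\ell\leftarrow k$, $c\leftarrow c_0$, $\delta\leftarrow O(H_i)$ into the second and second-to-last displayed estimates of \cite[p.~172]{Boerm} (which are exactly your $L^2$ and $H^1$ bounds) and then supplies only the choice of $\ell,k$ together with the elementary case analysis showing $\left(c_0\ell/k\right)^\ell\leq H_i^2$. Both of the subtleties you flag at the end are real and are \emph{not} addressed by the paper's terse citation: the elements of $X_i^{far}$ are only \emph{discretely} $A$-harmonic, so the Caccioppoli step does indeed require a discrete variant of the inequality or a consistency correction as you propose; and your ``one more Caccioppoli'' route to the $m=1$ case would cost a spurious $\ell\sim\log(1/H_i)$ factor relative to the stated $H_i^2$ bound, which is precisely why the paper quotes a \emph{separate} gradient estimate from \cite{Boerm} rather than deriving it from the $L^2$ one --- so your fallback of running the whole iteration in the $H^1$-norm is the correct choice.
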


\begin{proof}
Set
\begin{equation*}
\ell:=\max \left\{2,\left\lceil\frac{2}{\log 2}\log\frac{1}{H_i}\right\rceil\right\}\quad \text{and}\quad k:=\left\lceil\frac{2c_0\ell^2}{(\ell-1)}\right\rceil
\end{equation*}
for some $c_0=O(1).$ Choosing $p\leftarrow \ell$, $\ell\leftarrow k$, $i\leftarrow \ell$, $c\leftarrow c_0$, and $\delta\leftarrow O(H_i)$ in the second estimate of \cite[p.\,172]{Boerm} yields
\begin{equation}\label{Boerm1}
\|\tilde{u}_i^{far}-\hat{u}_i^{far}\|_{L^2(\omega_i)}\leq C H_i \left(c_0\frac{\ell}{k}\right)^\ell\|\nabla\tilde{u}_i^{far}\|_{L^2(\omega_{i,1})}.
\end{equation}

Similarly, choosing $p\leftarrow \ell$, $\ell\leftarrow k$, and  $c\leftarrow c_0$ in the second last estimate of \cite[p.\,172]{Boerm} we get
\begin{equation}\label{Boerm2}
\|\nabla(\tilde{u}_i^{far}-\hat{u}_i^{far})\|_{L^2(\omega_i)}\leq  \left(c_0\frac{\ell}{k}\right)^\ell\|\nabla\tilde{u}_
i^{far}\|_{L^2(\omega_{i,1})}.
\end{equation}
According to the definition of $\ell$ we have to distinguish the following two cases:
\begin{itemize}
\item Case 1: $\left\lceil\frac{2}{\log 2}\log\frac{1}{H_i}\right\rceil\leq2$\\
By definition of $\ell$ we know that $\ell=2$ and after some simple calculations we see that $H_i\geq\frac{1}{2}$. Therefore we obtain by the definition of $k$
\begin{equation}\label{case1}
\left(c_0\frac{\ell}{k}\right)^\ell=\left(\frac{\ell-1}{2\ell}\right)^\ell=\frac{1}{16}<\frac{1}{4}\leq H_i^{2}.
\end{equation}

\item Case 2: $\left\lceil\frac{2}{\log 2}\log\frac{1}{H_i}\right\rceil>2$\\
Set $\alpha:=\frac{2}{\log 2}$. Then $\ell=\lceil-\alpha\log H_i\rceil\geq-\alpha\log H_i$ and furthermore we have
\begin{align}\label{case2}
\left(c_0\frac{\ell}{k}\right)^\ell&=\left(\frac{\ell-1}{2\ell}\right)^\ell\leq 2^{-\ell}=e^{-\ell\log 2}\leq H_i^{\alpha\log 2}=H_i^{2}.
\end{align}
\end{itemize}
The assertion follows by combining \eqref{Boerm1}, \eqref{Boerm2}, \eqref{case1}, and \eqref{case2}.
\end{proof}

\begin{lemma}\label{lemma_v_near_far}
Define $d_i^{near}:=u_i^{near}-\tilde{u}_i^{near}$ and $d_i^{far}:=u_i^{far}-\hat{u}_i^{far}$ with $u_i^{near}$, $u_i^{far}$ as in \eqref{splitting}, $\tilde{u}_i^{near}$ as in \eqref{approx_near_far} and $\hat{u}_i^{far}$ as in Lemma \ref{lemma_aprox_tilde_u_far}. Set
\begin{equation*}
v^{near}:=\sum_{i=1}^Nb_id_i^{near}\qquad \text{and}\qquad v^{far}:=\sum_{i=1}^Nb_id_i^{far}.
\end{equation*}
Then the estimates
\begin{align*}
&\|\nabla v^{near}\|_{L^2(\Omega)}^2\leq 2M_0\sum\limits_{i=1}^N\left(\left\|\nabla d_i^{near}\right\|_{L^2\left(\omega_i\right)}^2+\frac{C^2}{H_i^2}\left\|d_i^{near}\right\|_{L^2\left(\omega_i\right)}^2\right)\\
&\|\nabla v^{far}\|_{L^2(\Omega)}^2\leq 2M_0\sum\limits_{i=1}^N\left(\left\|\nabla d_i^{far}\right\|_{L^2\left(\omega_i\right)}^2+\frac{C^2}{H_i^2}\left\|d_i^{far}\right\|_{L^2\left(\omega_i\right)}^2\right)
\end{align*}
hold with $M_0$ as in \eqref{overlap_const} and $H_i$ as in \eqref{local_mesh_width}.
\end{lemma}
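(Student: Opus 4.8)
The statement is a standard ``partition of unity'' estimate: we control $\|\nabla v\|_{L^2(\Omega)}$ for $v=\sum_i b_i d_i$ (with $d_i$ supported essentially on $\omega_i$) by the local pieces $d_i$ on $\omega_i$. The plan is to treat $v^{near}$ and $v^{far}$ by exactly the same computation, so I would write it once for a generic $v=\sum_{i=1}^N b_i d_i$ with $d_i$ supported in $\omega_i$ and then specialise.

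First I would expand the gradient of the product: $\nabla(b_i d_i)=d_i\nabla b_i + b_i\nabla d_i$, so that
\begin{equation*}
\|\nabla v\|_{L^2(\Omega)}^2=\Big\|\sum_{i=1}^N\big(d_i\nabla b_i+b_i\nabla d_i\big)\Big\|_{L^2(\Omega)}^2\leq 2\Big\|\sum_i d_i\nabla b_i\Big\|_{L^2(\Omega)}^2+2\Big\|\sum_i b_i\nabla d_i\Big\|_{L^2(\Omega)}^2,
\end{equation*}
using $(a+b)^2\le 2a^2+2b^2$. For each of the two sums I would invoke the finite-overlap property from Remark \ref{rem_overlap}: since every simplex $\tau$ lies in at most $M_0$ of the supports $\omega_i=\omega_{i,0}$, a Cauchy--Schwarz argument gives $\big\|\sum_i w_i\big\|_{L^2(\Omega)}^2\le M_0\sum_i\|w_i\|_{L^2(\omega_i)}^2$ for functions $w_i$ supported in $\omega_i$. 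Applying this with $w_i=d_i\nabla b_i$ and with $w_i=b_i\nabla d_i$ yields
\begin{equation*}
\|\nabla v\|_{L^2(\Omega)}^2\leq 2M_0\sum_{i=1}^N\Big(\|d_i\nabla b_i\|_{L^2(\omega_i)}^2+\|b_i\nabla d_i\|_{L^2(\omega_i)}^2\Big).
\end{equation*}

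Next I would bound the two integrands pointwise on $\omega_i$. Since $b_i$ is the nodal hat function we have $\|b_i\|_{L^\infty(\omega_i)}\le 1$, giving $\|b_i\nabla d_i\|_{L^2(\omega_i)}\le\|\nabla d_i\|_{L^2(\omega_i)}$. For the other term I use the standard inverse-type bound $\|\nabla b_i\|_{L^\infty(\omega_i)}\le C/H_i$, which follows from shape-regularity and Assumption \ref{ass_quasi-uniform regular triangulation} together with \eqref{diam_leq_CH} (the inscribed-ball radius on $\mathcal{G}_{i,2}$ is bounded below by $cH_i$); hence $\|d_i\nabla b_i\|_{L^2(\omega_i)}\le (C/H_i)\|d_i\|_{L^2(\omega_i)}$. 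Substituting both estimates produces exactly
\begin{equation*}
\|\nabla v\|_{L^2(\Omega)}^2\leq 2M_0\sum_{i=1}^N\Big(\|\nabla d_i\|_{L^2(\omega_i)}^2+\frac{C^2}{H_i^2}\|d_i\|_{L^2(\omega_i)}^2\Big),
\end{equation*}
and specialising $d_i=d_i^{near}$, $v=v^{near}$ and $d_i=d_i^{far}$, $v=v^{far}$ gives the two claimed inequalities. The only mildly delicate points are making sure the $d_i$ are genuinely supported in $\omega_i$ (this is built into the definitions: $\tilde u_i^{near},\tilde u_i^{far}\in S_{i,2}$ vanish outside $\omega_{i,2}$ and are multiplied by $b_i$, while $\hat u_i^{far}\in\tilde V_i^{far}$ lives on $\omega_{i,1}$ — after multiplication by $b_i$ everything is supported in $\omega_i$, cf. Remark \ref{remdim}), and pinning down the constant in $\|\nabla b_i\|_{L^\infty}\le C/H_i$ uniformly in $i$, which is where \eqref{diam_leq_CH} is used. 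I do not expect any real obstacle; the content is entirely the overlap counting plus two elementary local bounds.
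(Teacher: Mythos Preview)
Your proof is correct and yields the same estimate with the same constant $2M_0$, but via a slightly different route than the paper. The paper expands $\|\nabla v^{near}\|_{L^2(\Omega)}^2=\sum_i\langle\nabla(b_i d_i^{near}),\nabla v^{near}\rangle$, applies Cauchy--Schwarz termwise to obtain a bound of the form $\sum_i A_i\|\nabla v^{near}\|_{L^2(\omega_i)}$, then uses Young's inequality with parameter $\epsilon^2=M_0$ together with the overlap bound $\sum_i\|\nabla v^{near}\|_{L^2(\omega_i)}^2\le M_0\|\nabla v^{near}\|_{L^2(\Omega)}^2$ to absorb the $\|\nabla v^{near}\|$ term back into the left-hand side. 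You instead apply the overlap counting directly to the sum via the pointwise Cauchy--Schwarz estimate $\bigl\|\sum_i w_i\bigr\|_{L^2(\Omega)}^2\le M_0\sum_i\|w_i\|_{L^2(\omega_i)}^2$ for functions $w_i$ supported in $\omega_i$. Your argument is a bit more elementary since it avoids the Young-plus-absorption step; the paper's version is the classical ``test against itself'' pattern. Both are standard partition-of-unity arguments and equally valid here. One minor phrasing point: the $d_i$ themselves live on $\omega_{i,2}$, not $\omega_i$; it is the products $b_i\nabla d_i$ and $d_i\nabla b_i$ that are supported in $\omega_i$, which is all your overlap argument needs (and you effectively note this in your parenthetical remark).
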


\begin{proof}
Applying Cauchy--Schwarz inequality, using the Leibniz rule for products, a triangle inequality and an inverse inequality for $b_i$ we obtain the estimate
\begin{align*}
\left\|\nabla v^{near}\right\|_{L^2\left(\Omega\right)}^2&=\langle\nabla v^{near},\nabla v^{near}\rangle_{L^2\left(\Omega\right)}=\sum\limits_{i=1}^N\langle\nabla\left(b_id_i^{near}\right),\nabla v^{near}\rangle_{L^2\left(\Omega\right)}\\
&\leq \sum\limits_{i=1}^N\left\|\nabla\left(b_id_i^{near}\right)\right\|_{L^2\left(\omega_i\right)}\left\|\nabla v^{near}\right\|_{L^2\left(\omega_i\right)}\\
&\leq \sum\limits_{i=1}^N\left(\left\|\nabla d_i^{near}\right\|_{L^2\left(\omega_i\right)}+\frac{C}{H_i}\left\|d_i^{near}\right\|_{L^2\left(\omega_i\right)}\right)\left\|\nabla v^{near}\right\|_{L^2\left(\omega_i\right)}.
\end{align*}

A Young's inequality leads to
\begin{equation*}
\left\|\nabla v^{near}\right\|_{L^2\left(\Omega\right)}^2\leq \sum\limits_{i=1}^N\frac{\epsilon^2}{2}\left(\left\|\nabla d_i^{near}\right\|_{L^2\left(\omega_i\right)}+\frac{C}{H_i}\left\|d_i^{near}\right\|_{L^2\left(\omega_i\right)}\right)^2+\frac{1}{2\epsilon^2}\sum\limits_{i=1}^N\left\|\nabla v^{near}\right\|_{L^2\left(\omega_i\right)}^2.
\end{equation*}

The choice $\epsilon^2=M_0$ (cf.\ Remark \ref{rem_overlap}) yields
\begin{IEEEeqnarray}{rCl}\label{eps3}
\left\|\nabla v^{near}\right\|_{L^2\left(\Omega\right)}^2 &\leq & \sum\limits_{i=1}^N\frac{M_0}{2}\left(\left\|\nabla d_i^{near}\right\|_{L^2\left(\omega_i\right)}+\frac{C}{H_i}\left\|d_i^{near}\right\|_{L^2\left(\omega_i\right)}\right)^2
\nonumber\\
&& +\frac{1}{2M_0}\sum\limits_{i=1}^n\left\|\nabla v^{near}\right\|_{L^2\left(\omega_i\right)}^2
\nonumber\\
&\leq & \sum\limits_{i=1}^N\frac{M_0}{2}\left(\left\|\nabla d_i^{near}\right\|_{L^2\left(\omega_i\right)}+\frac{C}{H_i}\left\|d_i^{near}\right\|_{L^2\left(\omega_i\right)}\right)^2
\nonumber\\
&& +\frac{1}{2}\left\|\nabla v^{near}\right\|_{L^2\left(\Omega\right)}^2.
\end{IEEEeqnarray}

Hence, by \eqref{eps3} and a triangle inequality we get 
\begin{align*}
\left\|\nabla v^{near}\right\|_{L^2\left(\Omega\right)}^2&\leq M_0\sum\limits_{i=1}^N\left(\left\|\nabla d_i^{near}\right\|_{L^2\left(\omega_i\right)}+\frac{C}{H_i}\left\|d_i^{near}\right\|_{L^2\left(\omega_i\right)}\right)^2\nonumber\\
&\leq 2M_0\sum\limits_{i=1}^N\left(\left\|\nabla d_i^{near}\right\|_{L^2\left(\omega_i\right)}^2+\frac{C^2}{H_i^2}\left\|d_i^{near}\right\|_{L^2\left(\omega_i\right)}^2\right).
\end{align*}
This shows the first estimate. The proof of the second estimate is verbatim the same.
\end{proof}

\begin{lemma}\label{lemma_est_d_near}
Let $d_i^{near}$ as in Lemma \ref{lemma_v_near_far}. If Assumption \ref{ass} holds, then 
\begin{equation*}
\|\nabla d_i^{near}\|_{L^2(\omega_{i})}\leq C\left( H_i^2+H_i\right)\left\|f\right\|_{L^2(\omega_{i,1})}
\end{equation*}
and
\begin{equation*}
\| d_i^{near}\|_{L^2(\omega_i)}\leq C\left(H_i^3+H_i^2\right)\left\|f\right\|_{L^2(\omega_{i,1})}
\end{equation*}
with $H_i$ as in \eqref{local_mesh_width} and constants $C$ which depend on $\alpha$, $\beta$ (cf.\ \eqref{coeff}).
\end{lemma}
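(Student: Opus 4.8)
The plan is to estimate $d_i^{near}=u_i^{near}-\tilde u_i^{near}=L_{\omega_{i,2}}^{-1}(g_i^{near})-\tilde L_{\omega_{i,2}}^{-1}(P_i g_i^{near})$ by inserting the intermediate term $\tilde L_{\omega_{i,2}}^{-1}(g_i^{near})$ and splitting into a discretization error and a projection (quadrature) error:
\[
d_i^{near}=\Bigl(L_{\omega_{i,2}}^{-1}(g_i^{near})-\tilde L_{\omega_{i,2}}^{-1}(g_i^{near})\Bigr)+\tilde L_{\omega_{i,2}}^{-1}\bigl(g_i^{near}-P_ig_i^{near}\bigr).
\]
For the first summand I would apply the Corollary in \eqref{assumption}, which gives the bound $\tfrac{\beta}{\alpha}CH_i^2\|g_i^{near}\|_{L^2(\omega_{i,2})}$ in the $H^1(\omega_{i,2})$-norm; since $g_i^{near}$ equals $f$ on $\omega_{i,1}$ and vanishes on $\omega_i^{far}$, this is $\le CH_i^2\|f\|_{L^2(\omega_{i,1})}$. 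For the second summand I would use the stability bound \eqref{L} for $\tilde L_{\omega_{i,2}}^{-1}$ from $H^{-1}(\omega_{i,2})$ to $H_0^1(\omega_{i,2})$ together with Lemma \ref{lemma_projection}, which bounds $\|g_i^{near}-P_ig_i^{near}\|_{H^{-1}(\omega_{i,2})}$ by $CH_i\|g_i^{near}\|_{L^2(\omega_{i,2})}\le CH_i\|f\|_{L^2(\omega_{i,1})}$; note $P_i$ here is the $L^2$-projection onto piecewise constants on $\mathcal G_{i,1}$ so the local-mesh-width version of Lemma \ref{lemma_projection} applies on $\omega_{i,1}$, and $g_i^{near}$ is supported there. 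Combining the two contributions yields $\|\nabla d_i^{near}\|_{L^2(\omega_i)}\le\|d_i^{near}\|_{H^1(\omega_{i,2})}\le C(H_i^2+H_i)\|f\|_{L^2(\omega_{i,1})}$, which is the first asserted estimate (restricting the norm to $\omega_i\subset\omega_{i,2}$).

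For the $L^2$-estimate one extra power of $H_i$ must be gained, so the bare energy estimate above is not enough; the hard part is getting the optimal $L^2$-rate. I would run an Aubin--Nitsche duality argument on the patch $\omega_{i,2}$: for the first (discretization-error) summand, $\tilde L_{\omega_{i,2}}^{-1}$ is the Galerkin projection of $L_{\omega_{i,2}}^{-1}$ in the energy inner product, so the $L^2$-error is controlled by the product of the $H^1$-errors of the primal and dual problems, each $O(H_i^2)$ by the Corollary \eqref{assumption}/Assumption \ref{ass}, giving $O(H_i^4)\|f\|$ — more than enough. For the projection-error summand $\tilde L_{\omega_{i,2}}^{-1}(g_i^{near}-P_ig_i^{near})$, I would test against the solution of the dual problem $L_{\omega_{i,2}}^{-1}\phi$ with $\phi\in L^2$, use Galerkin orthogonality of $\tilde L_{\omega_{i,2}}^{-1}$ to replace the dual solution by its $S_{i,2}$-approximant, exploit $H^2$-type regularity of the dual solution via Assumption \ref{ass} (the $O(H_i^2)$ approximability), and again invoke Lemma \ref{lemma_projection} for the $O(H_i)$ bound on $g_i^{near}-P_ig_i^{near}$ in $H^{-1}$; this produces an $O(H_i\cdot H_i^2)=O(H_i^3)$ term (here one power of $H_i$ beyond the energy estimate comes from the duality/regularity of the dual problem). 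Summing, $\|d_i^{near}\|_{L^2(\omega_i)}\le C(H_i^3+H_i^2)\|f\|_{L^2(\omega_{i,1})}$.

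The constants in both estimates depend only on $\alpha,\beta$ (through \eqref{coeff}, \eqref{assumption}, \eqref{L}), on $C_{apx}$, and on Friedrichs'/Poincar\'e constants on the reference patch, all of which are uniform by shape-regularity and \eqref{diam_leq_CH}; in particular the dependence of the local Friedrichs constant on $\diam\omega_{i,2}\le CH_i$ is exactly what supplies the $H_i$-powers quoted above. I expect the only genuinely delicate point to be verifying that the duality argument for the projection-error term is legitimate, i.e.\ that the dual solution $L_{\omega_{i,2}}^{-1}\phi$ is approximated to order $H_i^2$ in $H^1(\omega_{i,2})$ by $S_{i,2}$ — but this is precisely the content of Assumption \ref{ass}, so no new ingredient is needed. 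The remaining manipulations are routine triangle-inequality and scaling bookkeeping.
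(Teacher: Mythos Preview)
Your argument for the gradient estimate is essentially identical to the paper's: the same splitting into a discretization error (handled by \eqref{assumption}) and a projection error (handled by \eqref{L} plus Lemma~\ref{lemma_projection}).

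For the $L^2$-estimate, however, you work much harder than necessary. You propose an Aubin--Nitsche duality argument, calling this ``the hard part''. The paper instead observes that $d_i^{near}\in H_0^1(\omega_{i,2})$ (both $L_{\omega_{i,2}}^{-1}(g_i^{near})$ and $\tilde L_{\omega_{i,2}}^{-1}(P_ig_i^{near})$ lie there) and that $\diam(\omega_{i,2})\le CH_i$, so Friedrichs' inequality immediately gives
\[
\|d_i^{near}\|_{L^2(\omega_i)}\le \|d_i^{near}\|_{L^2(\omega_{i,2})}\le CH_i\,\|\nabla d_i^{near}\|_{L^2(\omega_{i,2})}\le CH_i\bigl(H_i^2+H_i\bigr)\|f\|_{L^2(\omega_{i,1})}.
\]
That is the whole argument. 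You actually mention this scaling of the Friedrichs constant in your last paragraph, but only as a remark about the constants rather than as the mechanism for the $L^2$-bound itself.

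Your duality route would also reach the goal, but note a slip: for the projection-error term $\tilde L_{\omega_{i,2}}^{-1}(g_i^{near}-P_ig_i^{near})$ you claim $O(H_i^3)$ via duality, yet without $H^2$-regularity of the dual solution (which is unavailable here since $A\in L^\infty$ only) the duality pairing $(g_i^{near}-P_ig_i^{near},z_h)$ yields at best $O(H_i^2)$. This still suffices for the stated bound $C(H_i^3+H_i^2)$, so your conclusion is correct, but the intermediate accounting is off. The Friedrichs shortcut avoids all of this.
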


\begin{proof}
\eqref{splitting}, \eqref{approx_near_far} and a triangle inequality yield
\begin{IEEEeqnarray}{rCl}\label{nabla_d_near}
\left\|\nabla d_i^{near}\right\|_{L^2(\omega_{i,2})}& = & \left\|\nabla\left(L_{\omega_{i,2}}^{-1}(g_i^{near})-\tilde{L}_{\omega_{i,2}}^{-1}(P_ig_i^{near})\right)\right\|_{L^2(\omega_{i,2})}
\nonumber\\
&\leq & \left\|\nabla\left(L_{\omega_{i,2}}^{-1}(g_i^{near})-\tilde{L}_{\omega_{i,2}}^{-1}(g_i^{near})\right)\right\|_{L^2(\omega_{i,2})}
\nonumber\\
&& +\left\|\nabla\left(\tilde{L}_{\omega_{i,2}}^{-1}(g_i^{near}-P_ig_i^{near})\right)\right\|_{L^2(\omega_{i,2})},
\end{IEEEeqnarray}
where $P_i$ is the $L^2$-orthogonal projection of $L^2(\omega_{i,1})$ onto $S_0(\mathcal{G}_{i,1})$.
In order to estimate the second term of \eqref{nabla_d_near} we use \eqref{L} and Lemma \ref{lemma_projection}. This leads to
\begin{align}\label{d_near_nabla_second}
\left\|\nabla\left(\tilde{L}_{\omega_{i,2}}^{-1}(g_i^{near}-P_ig_i^{near})\right)\right\|_{L^2(\omega_{i,2})}&\leq\frac{C}{\alpha}\|g_i^{near}-P_ig_i^{near}\|_{H^{-1}(\omega_{i,2})}\nonumber\\
&\leq C\frac{H_i}{\alpha}\|g_i^{near}\|_{L^2(\omega_{i,2})}.
\end{align} 

By \eqref{nabla_d_near}, \eqref{assumption}, \eqref{d_near_nabla_second} and the definition of $g_i^{near}$ (cf.\ \eqref{g_i_near}) we obtain

\begin{align*}
\left\|\nabla d_i^{near}\right\|_{L^2(\omega_{i,2})}&\leq \frac{\beta}{\alpha}CH_i^2\left\|g_i^{near}\right\|_{L^2(\omega_{i,2})}+C\frac{H_i}{\alpha}\left\|g_i^{near}\right\|_{L^2(\omega_{i,2})}\\
&\leq C\left( H_i^2+H_i\right)\left\|f\right\|_{L^2(\omega_{i,1})}.
\end{align*}

Since $\omega_{i}\subset\omega_{i,2}$ we also have
\begin{equation*}
\left\|\nabla d_i^{near}\right\|_{L^2(\omega_{i})}\leq C\left( H_i^2+H_i\right)\left\|f\right\|_{L^2(\omega_{i,1})} .
\end{equation*}

By \eqref{splitting}, \eqref{approx_near_far}, a triangle inequality and Friedrichs' inequality we get
\begin{IEEEeqnarray}{rCl}\label{d_near}
\left\|d_i^{near}\right\|_{L^2(\omega_i)}&=&\left\|L_{\omega_{i,2}}^{-1}(g_i^{near})-\tilde{L}_{\omega_{i,2}}^{-1}(P_ig_i^{near})\right\|_{L^2(\omega_i)}
\nonumber\\
&\leq& \left\|L_{\omega_{i,2}}^{-1}(g_i^{near})-\tilde{L}_{\omega_{i,2}}^{-1}(g_i^{near})\right\|_{L^2(\omega_{i})}+\left\| \tilde{L}_{\omega_{i,2}}^{-1}(g_i^{near}-P_ig_i^{near})\right\|_{L^2(\omega_i)}
\nonumber\\
&\leq & CH_i \Big(\left\|\nabla\left(L_{\omega_{i,2}}^{-1}(g_i^{near})-\tilde{L}_{\omega_{i,2}}^{-1}(g_i^{near})\right)\right\|_{L^2(\omega_{i,2})}
\nonumber\\
&&+\left\|\nabla\left(\tilde{L}_{\omega_{i,2}}^{-1}(g_i^{near}-P_ig_i^{near})\right)\right\|_{L^2(\omega_{i,2})}\Big).
\end{IEEEeqnarray}
 The combination of \eqref{d_near}, \eqref{assumption} and \eqref{d_near_nabla_second} leads to
\begin{align*}
\left\|d_i^{near}\right\|_{L^2(\omega_i)}&\leq  C\frac{\beta}{\alpha}H_i^3\left\|g_i^{near}\right\|_{L^2(\omega_{i,2})}+C\frac{H_i^2}{\alpha}\left\|g_i^{near}\right\|_{L^2(\omega_{i,2})}
\\
&\leq C\left(H_i^3+H_i^2\right)\left\|f\right\|_{L^2(\omega_{i,1})}.
\end{align*}
In the last step we used the definition of $g_i^{near}$ (cf.\ \eqref{g_i_near}).
\end{proof}

\begin{lemma}\label{lemma_est_d_far}
Let $d_i^{far}$ as in Lemma \ref{lemma_v_near_far}. If Assumption \ref{ass} holds, then 
\begin{equation*}
\|\nabla d_i^{far}\|_{L^2(\omega_{i})}\leq C \left(H_i^2+h_i\right)\|g_i^{far}\|_{L^2(\omega_i^{far})}
\end{equation*}
and
\begin{equation*}
\| d_i^{far}\|_{L^2(\omega_i)}\leq C\left(H_i^3+H_ih_i\right)\|g_i^{far}\|_{L^2(\omega_i^{far})}
\end{equation*}
with $H_i$ as in \eqref{local_mesh_width} and $h_i:=\max_{\tau \in\mathcal{R}^t(\mathcal{G}_i^{far})}\diam\tau$ is the mesh width of the refined mesh $\mathcal{R}^t(\mathcal{G}_i^{far})$ (cf.\ \eqref{omega_far} and \eqref{refinement_op}). The constants $C$ depend on $\alpha$, $\beta$ (cf.\ \eqref{coeff}).
\end{lemma}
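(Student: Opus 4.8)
The plan is to mirror the structure of the proof of Lemma \ref{lemma_est_d_near}, but now tracking two mesh widths --- the coarse width $H_i$ on $\omega_{i,1}$ and the fine width $h_i$ on the refined patch $\omega_i^{far}$ --- since the farfield source $g_i^{far}$ is supported in $\omega_i^{far}$ and is projected onto the \emph{refined} piecewise-constant space $S_0(\mathcal{R}^t(\mathcal{G}_i^{far}))$. First I would write, using \eqref{splitting}, \eqref{approx_near_far} and a triangle inequality,
\begin{equation*}
\|\nabla d_i^{far}\|_{L^2(\omega_{i,2})}\leq \left\|\nabla\bigl(L_{\omega_{i,2}}^{-1}(g_i^{far})-\tilde{L}_{\omega_{i,2}}^{-1}(g_i^{far})\bigr)\right\|_{L^2(\omega_{i,2})}+\left\|\nabla\bigl(\tilde{L}_{\omega_{i,2}}^{-1}(g_i^{far}-P_i^tg_i^{far})\bigr)\right\|_{L^2(\omega_{i,2})},
\end{equation*}
so that the first term is controlled by \eqref{assumption} (the consistency error of $\tilde L^{-1}$ versus $L^{-1}$, of order $H_i^2$) and the second term by the stability bound \eqref{L} together with a projection estimate in the $H^{-1}$-norm.

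The key new ingredient is the analogue of Lemma \ref{lemma_projection} on the refined mesh: since $g_i^{far}$ is supported in $\omega_i^{far}$ and $P_i^t$ is the $L^2$-projection onto piecewise constants on $\mathcal{R}^t(\mathcal{G}_i^{far})$, the same Hölder plus Friedrichs argument gives $\|g_i^{far}-P_i^tg_i^{far}\|_{H^{-1}(\omega_{i,2})}\leq C h_i\|g_i^{far}\|_{L^2(\omega_i^{far})}$, with the \emph{fine} width $h_i$ rather than $H_i$ (the Friedrichs constant still refers to the whole patch $\omega_{i,2}$, which is fine because it only makes the estimate weaker). Plugging this into \eqref{L} yields the bound $C h_i/\alpha\,\|g_i^{far}\|_{L^2(\omega_i^{far})}$ for the second term, and combining with the $H_i^2$-term from \eqref{assumption} gives the first asserted inequality, $\|\nabla d_i^{far}\|_{L^2(\omega_i)}\leq C(H_i^2+h_i)\|g_i^{far}\|_{L^2(\omega_i^{far})}$, after restricting from $\omega_{i,2}$ to $\omega_i\subset\omega_{i,2}$.

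For the $L^2$-estimate of $d_i^{far}$ I would proceed exactly as in the second half of Lemma \ref{lemma_est_d_near}: apply Friedrichs' inequality on $\omega_i$ to bound $\|d_i^{far}\|_{L^2(\omega_i)}$ by $CH_i$ times $\|\nabla d_i^{far}\|_{L^2(\omega_{i,2})}$, then reuse the two-term splitting above. Here one has to be slightly careful: the first (consistency) term, when estimated in $L^2$ directly via \eqref{assumption}, already carries an extra power $H_i^3$, while for the projection term one multiplies the $H_i$ from Friedrichs by the $h_i$ from the refined projection estimate, giving $H_i h_i$. Hence $\|d_i^{far}\|_{L^2(\omega_i)}\leq C(H_i^3+H_i h_i)\|g_i^{far}\|_{L^2(\omega_i^{far})}$, as claimed.

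The main obstacle --- really a bookkeeping subtlety rather than a genuine difficulty --- is making sure that the localization of $g_i^{far}$ to $\omega_i^{far}$ is respected throughout: the operator $\tilde L_{\omega_{i,2}}^{-1}$ acts on functions on all of $\omega_{i,2}$, and the refined mesh only covers $\omega_i^{far}=\mathcal{G}_{i,2}\setminus\mathcal{G}_{i,1}$, so one must check that extending $g_i^{far}$ by zero to $\omega_{i,2}$ and extending $P_i^t g_i^{far}$ by zero both behave correctly, and that the Friedrichs/Poincaré constants invoked are those of the fixed patch $\omega_{i,2}$ (uniformly bounded by shape regularity and \eqref{diam_leq_CH}), not of the shrinking refined cells. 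Once this is set up, every inequality used has already appeared --- \eqref{assumption}, \eqref{L}, the argument of Lemma \ref{lemma_projection} --- and the proof is a direct transcription with $h_i$ inserted in place of $H_i$ wherever the refined projection enters.
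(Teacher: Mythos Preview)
Your proposal has a genuine gap: you have misidentified what $d_i^{far}$ is. By definition in Lemma~\ref{lemma_v_near_far}, $d_i^{far}=u_i^{far}-\hat{u}_i^{far}$, where $\hat{u}_i^{far}\in\tilde V_i^{far}$ is the \emph{low-dimensional} approximation constructed in Lemma~\ref{lemma_aprox_tilde_u_far}, not $\tilde u_i^{far}=\tilde L_{\omega_{i,2}}^{-1}(P_i^t g_i^{far})$. Your two-term splitting therefore only bounds $u_i^{far}-\tilde u_i^{far}$ and omits the passage from $\tilde u_i^{far}$ to $\hat u_i^{far}$. The paper's proof carries a third term,
\[
\|\nabla(\tilde u_i^{far}-\hat u_i^{far})\|_{L^2(\omega_i)}\quad\text{and}\quad\|\tilde u_i^{far}-\hat u_i^{far}\|_{L^2(\omega_i)},
\]
which is controlled by Lemma~\ref{lemma_aprox_tilde_u_far} together with the stability \eqref{L}: one gets $CH_i^2\|\nabla\tilde u_i^{far}\|_{L^2(\omega_{i,1})}\leq (C/\alpha)H_i^2\|g_i^{far}\|_{L^2(\omega_i^{far})}$ for the gradient and $CH_i^3\|g_i^{far}\|_{L^2(\omega_i^{far})}$ for the $L^2$-norm. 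These contributions are of the same order as your first (consistency) term and are absorbed into the $H_i^2$ and $H_i^3$ parts of the final bound, so the stated inequalities are unaffected --- but without this third term the argument is incomplete.

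Note also that Lemma~\ref{lemma_aprox_tilde_u_far} only yields estimates on $\omega_i$, not on $\omega_{i,2}$; this is why the paper writes the triangle inequality on $\omega_i$ from the start and then enlarges to $\omega_{i,2}$ only for the first two terms where \eqref{assumption} and \eqref{L} require it. Apart from this missing piece, your treatment of the first two terms (consistency via \eqref{assumption}, refined projection via the $h_i$-analogue of Lemma~\ref{lemma_projection} plus \eqref{L}, and Friedrichs for the $L^2$-estimate) matches the paper exactly.
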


\begin{proof}
By the definition of $d_i^{far}$, \eqref{splitting} and two  triangle inequalities we get
\begin{IEEEeqnarray}{rCl}\label{nabla_d_far}
\|\nabla d_i^{far}\|_{L^2(\omega_i)}&=&\left\|\nabla\left(L_{\omega_{i,2}}^{-1}(g_i^{far})-\hat{u}_i^{far}\right)\right\|_{L^2(\omega_i)}
\nonumber\\
&\leq & \left\|\nabla\left(L_{\omega_{i,2}}^{-1}(g_i^{far})-\tilde{L}_{\omega_{i,2}}^{-1}(P_i^tg_i^{far})\right)\right\|_{L^2(\omega_{i})}+\|\nabla(\tilde{u}_i^{far}-\hat{u}_i^{far})\|_{L^2(\omega_i)}
\nonumber\\
&\leq &\left\|\nabla\left(L_{\omega_{i,2}}^{-1}(g_i^{far})-\tilde{L}_{\omega_{i,2}}^{-1}(g_i^{far})\right)\right\|_{L^2(\omega_{i})}+\left\|\nabla\left(\tilde{L}_{\omega_{i,2}}^{-1}(g_i^{far}-P_i^tg_i^{far})\right)\right\|_{L^2(\omega_{i})}
\nonumber\\
&&+\|\nabla(\tilde{u}_i^{far}-\hat{u}_i^{far})\|_{L^2(\omega_i)},
\end{IEEEeqnarray}

where $P_i^t$ denotes the $L^2$-orthogonal projection of $L^2(\omega_i^{far})$ onto $S_0(\mathcal{R}^t(\mathcal{G}_i^{far}))$ and $\tilde{u}_i^{far}$ is as in \eqref{approx_near_far}.

For the first term of \eqref{nabla_d_far} we can use that $\omega_i\subset\omega_{i,2}$ and \eqref{assumption}. This leads to
\begin{align}\label{first_term_nabla_d_far}
\left\|\nabla\left(L_{\omega_{i,2}}^{-1}(g_i^{far})-\tilde{L}_{\omega_{i,2}}^{-1}(g_i^{far})\right)\right\|_{L^2(\omega_{i})}&\leq \left\|\nabla\left(L_{\omega_{i,2}}^{-1}(g_i^{far})-\tilde{L}_{\omega_{i,2}}^{-1}(g_i^{far})\right)\right\|_{L^2(\omega_{i,2})}\nonumber\\
&\leq \frac{\beta}{\alpha}CH_i^2\|g_i^{far}\|_{L^2(\omega_{i,2})}.
\end{align}

In order to get an estimate for the second term of \eqref{nabla_d_far} we use $\omega_i\subset\omega_{i,2}$, \eqref{L} and Lemma \ref{lemma_projection}. This yields
\begin{align}\label{second_term_nabla_d_far}
\left\|\nabla\left(\tilde{L}_{\omega_{i,2}}^{-1}(g_i^{far}-P_i^tg_i^{far})\right)\right\|_{L^2(\omega_{i})}&\leq \left\|\nabla\left(\tilde{L}_{\omega_{i,2}}^{-1}(g_i^{far}-P_i^tg_i^{far})\right)\right\|_{L^2(\omega_{i,2})}\nonumber\\
&\leq \frac{C}{\alpha}\|g_i^{far}-P_i^tg_i^{far}\|_{H^{-1}(\omega_{i,2})}\nonumber\\
&\leq C\frac{h_i}{\alpha}\|g_i^{far}\|_{L^2(\omega_{i,2})}.
\end{align}

The third term of \eqref{nabla_d_far} can be estimated by Lemma \ref{lemma_aprox_tilde_u_far}, \eqref{approx_near_far}, using that $\omega_{i,1}\subset\omega_{i,2}$, \eqref{L} and Friedrichs' inequality. Thus we have
\begin{align}\label{third_term_nabla_d_far}
\|\nabla(\tilde{u}_i^{far}-\hat{u}_i^{far})\|_{L^2(\omega_i)}&\leq CH_i^{2}\|\nabla\tilde{u}_i^{far}\|_{L^2(\omega_{i,1})}\nonumber\\
&\leq CH_i^{2}\left\|\nabla \tilde{L}_{\omega_{i,2}}^{-1}(P_i^tg_i^{far})\right\|_{L^2(\omega_{i,2})}\nonumber\\
&\leq\frac{C}{\alpha}H_i^{2}\|P_i^tg_i^{far}\|_{H^{-1}(\omega_{i,2})}\nonumber\\
&\leq\frac{C}{\alpha}H_i^{2}\|g_i^{far}\|_{L^2(\omega_{i,2})}.
\end{align}

Hence, the combination of \eqref{nabla_d_far}, \eqref{first_term_nabla_d_far}, \eqref{second_term_nabla_d_far}, \eqref{third_term_nabla_d_far} and recalling that $g_i^{far}|_{\omega_{i,1}}=0$ yields
\begin{align*}
\|\nabla d_i^{far}\|_{L^2(\omega_i)}&\leq \left(\frac{\beta}{\alpha}CH_i^2+C\frac{h_i}{\alpha}+\frac{C}{\alpha}H_i^{2}\right)\|g_i^{far}\|_{L^2(\omega_i^{far})}\\
&\leq C(H_i^2+h_i)\|g_i^{far}\|_{L^2(\omega_i^{far})}.
\end{align*}

The estimate for the $L^2$-norm of $d_i^{far}$ can be obtained similarly. By triangle inequalities, Friedrichs' inequality, Lemma \ref{lemma_aprox_tilde_u_far}, \eqref{assumption}, \eqref{L} and Lemma \ref{lemma_projection} we get
\begin{IEEEeqnarray*}{rCl}
\|d_i^{far}\|_{L^2(\omega_i)}&=&\left\|L_{\omega_{i,2}}^{-1}(g_i^{far})-\hat{u}_i^{far}\right\|_{L^2(\omega_i)}
\nonumber\\
&\leq & \left\|L_{\omega_{i,2}}^{-1}(g_i^{far})-\tilde{L}_{\omega_{i,2}}^{-1}(g_i^{far})\right\|_{L^2(\omega_i)}+\left\| \tilde{L}_{\omega_{i,2}}^{-1}(g_i^{far}-P_i^tg_i^{far})\right\|_{L^2(\omega_i)}
\nonumber\\
&&+\|\tilde{u}_i^{far}-\hat{u}_i^{far}\|_{L^2(\omega_i)}
\nonumber\\
&\leq & CH_i\left\|\nabla\left(L_{\omega_{i,2}}^{-1}(g_i^{far})-\tilde{L}_{\omega_{i,2}}^{-1}(g_i^{far})\right)\right\|_{L^2(\omega_{i,2})}
\nonumber\\
&&+CH_i\left\|\nabla\left(\tilde{L}_{\omega_{i,2}}^{-1}(g_i^{far}-P_i^tg_i^{far})\right)\right\|_{L^2(\omega_{i,2})}
+CH_i^{3}\|\nabla\tilde{u}_i^{far}\|_{L^2(\omega_{i,1})}
\nonumber\\
&\leq & \left(C\frac{\beta}{\alpha}H_i^3+CH_i\frac{h_i}{\alpha}+\frac{C}{\alpha}H_i^{3}\right)\|g_i^{far}\|_{L^2(\omega_i^{far})}
\nonumber\\
&\leq & C(H_i^3+H_ih_i)\|g_i^{far}\|_{L^2(\omega_i^{far})}.
\end{IEEEeqnarray*}
\end{proof}

\begin{theorem}\label{main_theo}
Let $\Omega\subset\R^d$ ($d\geq 2$) be a bounded domain with $\partial\Omega\in C^1$ and let Assumption \ref{ass} be satisfied. Let $u$ denote the solution of \eqref{prob_weak} and $u_{AL}^{GAL}$ its approximation given by \eqref{Galerkin_sol_II}. Let the parameters $\ell$ and $k$ in the definition of the farfield part of $V_{AL}$ be chosen according to
\begin{equation*}
\ell:=\max \left\{2,\left\lceil\frac{2}{\log 2}\log\frac{1}{H_i}\right\rceil\right\}\quad \text{and}\quad k:=\left\lceil\frac{2c_0\ell^2}{(\ell-1)}\right\rceil
\end{equation*}
for some $c_0=O(1)$. Moreover let $Q\in (6,\infty)$ and $P\in(2Q/(Q-6),\infty)$ be fixed. Assume that $A$ satisfies \eqref{coeff} as well as $\alpha/\beta\in[\max\{1-1/K_Q,1-1/K_P\},1]$ with $K_Q$ and $K_P$ as in Theorem \ref{theo_Simader}. Further let $f\in L^{P}(\Omega)$ and assume that there exists a constant $C>0$ such that $N\leq CH^{-d}$ holds. If the refinement parameter $t$ is chosen according to 
\begin{equation*}
t:=\left\lceil\lb\frac{1}{H}\right\rceil,
\end{equation*} 
then the error estimate
\begin{equation}\label{main_estimate}
\left\|A^{1/2}\nabla(u-u_{AL}^{GAL})\right\|_{L^2(\Omega)}\leq CH\left\|f\right\|_{L^{p}(\Omega)}
\end{equation}
holds for any $p\in (2Q/(Q-6),P]$ with $p=2q/(q-2)$ for some $2<q< \frac{Q}{3}$. The constant $C$ depends on $\alpha$, $\beta$ and $p$.\\
 For the dimension we have
\begin{equation}\label{dimension_tilde_VAL-II}
\dim V_{AL}\leq CN\ell^{d+1}\leq CH^{-d}\log^{d+1}\frac{1}{H}.
\end{equation}
\end{theorem}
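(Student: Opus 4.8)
The plan is to invoke the best-approximation property of the Galerkin method and then exhibit a good competitor in $V_{AL}$. Since $a$ is symmetric and coercive, $u_{AL}^{GAL}$ is the $\|A^{1/2}\nabla\cdot\|_{L^2(\Omega)}$-orthogonal projection of $u$ onto $V_{AL}$, so $\|A^{1/2}\nabla(u-u_{AL}^{GAL})\|_{L^2(\Omega)}\le\|A^{1/2}\nabla(u-v_{AL})\|_{L^2(\Omega)}$ for every $v_{AL}\in V_{AL}$. I would take $v_{AL}:=\sum_{i=1}^N b_i(\tilde u_i^{near}+\hat u_i^{far})$, with $\tilde u_i^{near}$ as in \eqref{approx_near_far} and $\hat u_i^{far}$ as in Lemma \ref{lemma_aprox_tilde_u_far}. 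This lies in $V_{AL}$: writing $P_ig_i^{near}=\sum_{\tau\in\mathcal G_{i,1}}\lambda_\tau\chi_\tau$ and using linearity of $\tilde L_{\omega_{i,2}}^{-1}$ gives $b_i\tilde u_i^{near}=\sum_\tau\lambda_\tau b_i\tilde B_{i,\tau}^{near}\in V_i^{near}$, while $\hat u_i^{far}\in\tilde V_i^{far}$ gives $b_i\hat u_i^{far}\in V_i^{far}$. Exploiting the partition-of-unity identity $\sum_i b_i=1$ together with $b_iu_i=b_iu$ (because $\chi_i\equiv1$ and $\bar u_i=0$ on $\supp b_i=\omega_i\subset\omega_{i,1}$), one gets $u=\sum_ib_iu_i$, hence $u-v_{AL}=\sum_ib_i\bigl((u_i^{near}-\tilde u_i^{near})+(u_i^{far}-\hat u_i^{far})\bigr)=v^{near}+v^{far}$ in the notation of Lemma \ref{lemma_v_near_far}, and $\|A^{1/2}\nabla(u-v_{AL})\|_{L^2(\Omega)}\le\sqrt\beta\bigl(\|\nabla v^{near}\|_{L^2(\Omega)}+\|\nabla v^{far}\|_{L^2(\Omega)}\bigr)$.

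For the nearfield term I would combine Lemma \ref{lemma_v_near_far} with Lemma \ref{lemma_est_d_near}: since $H_i\le H$, the bracket $\|\nabla d_i^{near}\|_{L^2(\omega_i)}^2+C^2H_i^{-2}\|d_i^{near}\|_{L^2(\omega_i)}^2$ is $\le CH^2\|f\|_{L^2(\omega_{i,1})}^2$, and summing over $i$ with the finite-overlap constants of Remark \ref{rem_overlap} yields $\|\nabla v^{near}\|_{L^2(\Omega)}\le CH\|f\|_{L^2(\Omega)}\le CH\|f\|_{L^p(\Omega)}$ on the bounded domain $\Omega$.

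The farfield term is the heart of the matter. Lemma \ref{lemma_v_near_far} and Lemma \ref{lemma_est_d_far} reduce it to $\|\nabla v^{far}\|_{L^2(\Omega)}^2\le C\sum_i(H_i^2+h_i)^2\|g_i^{far}\|_{L^2(\omega_i^{far})}^2$; the choice $t=\lceil\lb\frac1H\rceil$ makes $h_i\le 2^{-t}H_i\le HH_i$, so $(H_i^2+h_i)^2\le CH^2H_i^2$. It then remains to bound $\|g_i^{far}\|_{L^2(\omega_i^{far})}$ with the right power of $H_i$. Using the explicit form \eqref{g_i_far} of $g_i^{far}$, I would estimate its three terms by H\"older's inequality with conjugate exponents $q$ and $p=2q/(q-2)$ (so $\tfrac1q=\tfrac12-\tfrac1p$), invoking the cutoff bounds \eqref{cutoff_estimate_1}--\eqref{cutoff_estimate_3} (legitimate for $2<q<Q/3$), a Poincar\'e--Wirtinger inequality on $\omega_i^{far}$ with constant $\sim H_i$ (shape regularity and the bounded number of layers) for the factor $u-\bar u_i$, and the $W^{1,p}$-regularity of Theorem \ref{theo_gradient_estimate} (which needs $\alpha/\beta\ge1-1/K_P$ and $p\le P$), together with the $W^{1,q}$-regularity underlying the cutoff construction (which needs $\alpha/\beta\ge1-1/K_Q$). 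This gives $\|g_i^{far}\|_{L^2(\omega_i^{far})}\le C\bigl(H_i^{d/q}\|f\|_{L^p(\omega_i^{far})}+H_i^{d/q-1}\|\nabla u\|_{L^p(\omega_i^{far})}\bigr)$. Plugging in, the dominant contribution to $\|\nabla v^{far}\|^2_{L^2(\Omega)}$ is $CH^2\sum_iH_i^{2d/q}\|\nabla u\|_{L^p(\omega_i^{far})}^2\le CH^{2+2d/q}\sum_i\|\nabla u\|_{L^p(\omega_i^{far})}^2$. The last sum I would handle by the discrete H\"older inequality $\bigl(\sum_i a_i^2\bigr)^{1/2}\le N^{1/2-1/p}\bigl(\sum_i a_i^p\bigr)^{1/p}$, the finite overlap of the $\omega_i^{far}$, and the hypothesis $N\le CH^{-d}$, which turns it into $\le CH^{-d(1-2/p)}\|\nabla u\|_{L^p(\Omega)}^2$; since $\tfrac1q=\tfrac12-\tfrac1p$ implies $2d/q=d-2d/p$, the powers of $H$ cancel exactly, $\|\nabla u\|_{L^p(\Omega)}\le C\|f\|_{L^p(\Omega)}$ by Theorem \ref{theo_gradient_estimate}, and altogether $\|\nabla v^{far}\|_{L^2(\Omega)}\le CH\|f\|_{L^p(\Omega)}$.

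Adding the two bounds gives \eqref{main_estimate}. For the dimension estimate I would simply use $\dim V_{AL}\le\sum_{i=1}^N(\dim V_i^{near}+\dim V_i^{far})$, together with $\dim V_i^{near}\le\#\mathcal G_{i,1}\le C_\#$ from \eqref{diam_leq_CH}, and $\dim V_i^{far}\le\ell k^d$ from Remark \ref{remdim} with $k=\lceil 2c_0\ell^2/(\ell-1)\rceil=O(\ell)$, so $\dim V_i^{far}\le C\ell^{d+1}$; since the (quasi-uniform) mesh has $\ell=O(\log\frac1H)$ uniformly in $i$, combining with $N\le CH^{-d}$ yields \eqref{dimension_tilde_VAL-II}. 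I expect the main obstacle to be the farfield paragraph: keeping exact track of the $H_i$-powers through the $L^q$ cutoff estimates, and verifying that the loss $N^{1-2/p}$ from the discrete H\"older step is \emph{precisely} compensated by the gain $N\le CH^{-d}$ via the identity $2d/q=d-2d/p$ — this balancing, and the attendant constraints relating $Q$, $P$, $p$, $q$ and forcing $\alpha/\beta\ge\max\{1-1/K_Q,1-1/K_P\}$, is where all the care is needed; the uniformity of the Poincar\'e constants on the annular patches $\omega_i^{far}$ is a secondary point.
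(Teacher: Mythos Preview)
Your proposal is correct and follows essentially the same route as the paper: Galerkin best approximation, the partition-of-unity splitting $u=\sum_i b_i(u_i^{near}+u_i^{far})$ with competitor $\sum_i b_i(\tilde u_i^{near}+\hat u_i^{far})$, the nearfield and farfield estimates via Lemmas~\ref{lemma_v_near_far}--\ref{lemma_est_d_far}, H\"older with the cutoff bounds \eqref{cutoff_estimate_1}--\eqref{cutoff_estimate_3} and Poincar\'e for $\|g_i^{far}\|_{L^2}$, a discrete H\"older step combined with $N\le CH^{-d}$ to recover the lost powers of $H$, and the dimension count via $k=O(\ell)$. The only cosmetic difference is that you simplify $(H_i^2+h_i)^2\le CH^2H_i^2$ up front (using $h_i\le 2^{-t}H_i\le HH_i$), whereas the paper carries $(H_i^4+h_i^2)$ through the discrete H\"older computation and only invokes $h\le CH^2$ at the end; both lead to the same cancellation $2d/q=d-2d/p$ that you correctly identify as the crux.
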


\begin{proof}
Let $f\in L^P(\Omega)$ and set $u:=L_\Omega^{-1}f$. Let $u_{AL}^{GAL}\in V_{AL}$ be the Galerkin approximation of $u$ given by \eqref{Galerkin_sol_II}. By the Galerkin orthogonality we obtain for any $u_{AL}\in V_{AL}$
\begin{align*}
\|A^{1/2}\nabla(u-u_{AL}^{GAL})\|_{L^2(\Omega)}^2&=a(u-u_{AL}^{GAL},u-u_{AL}^{GAL})\\&=a(u-u_{AL}^{GAL},u-u_{AL})\\&\leq \|A^{1/2}\nabla(u-u_{AL}^{GAL})\|_{L^2(\Omega)}\|A^{1/2}\nabla(u-u_{AL})\|_{L^2(\Omega)}.
\end{align*}
Hence,
\begin{equation}\label{Galerkin_ortho}
\|A^{1/2}\nabla(u-u_{AL}^{GAL})\|_{L^2(\Omega)}\leq \|A^{1/2}\nabla(u-u_{AL})\|_{L^2(\Omega)}\quad\forall\; u_{AL}\in V_{AL}.
\end{equation}
Further let $u_i^{near}$ and $u_i^{far}$ as in \eqref{splitting}. Then it holds that
\begin{equation*}
u=\sum_{i=1}^N b_i(u_i^{near}+u_i^{far}).
\end{equation*}
Let  $\tilde{u}_i^{near}$  and $\tilde{u}_i^{far}$ as in \eqref{approx_near_far}. We choose $\hat{u}_i^{far}$  as in Lemma \ref{lemma_aprox_tilde_u_far} and $u_{AL}\in V_{AL}$ by
\begin{equation*}
u_{AL}=\sum_{i=1}^N b_i(\tilde{u}_i^{near}+\hat{u}_i^{far}).
\end{equation*}

Using this notation we have
\begin{equation*}
u-u_{AL}=\sum_{i=1}^Nb_i(u_i^{near}-\tilde{u}_i^{near})+\sum_{i=1}^Nb_i(u_i^{far}-\hat{u}_i^{far}).
\end{equation*}

First we consider the nearfield part. Let $d_i^{near}:=u_i^{near}-\tilde{u}_i^{near}$ and set
\begin{equation*}
v^{near}:=\sum_{i=1}^Nb_id_i^{near}.
\end{equation*}

By Lemma \ref{lemma_v_near_far} we know that
\begin{equation*}
\left\|\nabla v^{near}\right\|_{L^2\left(\Omega\right)}^2
\leq 2M_0 \sum\limits_{i=1}^N\left(\left\|\nabla d_i^{near}\right\|_{L^2\left(\omega_i\right)}^2+\frac{C^2}{H_i^2}\left\|d_i^{near}\right\|_{L^2\left(\omega_i\right)}^2\right).
\end{equation*}

Moreover, by Lemma \ref{lemma_est_d_near} and since every simplex $\tau$ is contained in at most $M_1$ domains $\omega_{i,1}$ (cf.\ \eqref{overlap_const}) we obtain
\begin{align*}
\left\|\nabla v^{near}\right\|_{L^2(\Omega)}&\leq C\sqrt{\sum_{i=1}^N\left(H_i^4+H_i^2\right)\left\|f\right\|_{L^2(\omega_{i,1})}^2}\\
&\leq C\left(H^2+H\right)\left\|f\right\|_{L^2(\Omega)}.
\end{align*}

Since the embedding $L^p(\Omega)\hookrightarrow L^2(\Omega)$ is continuous for any $p\geq 2$, we have
\begin{equation}\label{estimate_nearfield}
\left\|\nabla v^{near}\right\|_{L^2(\Omega)}\leq C\left(H^2+H\right)\left\|f\right\|_{L^{p}(\Omega)}\leq CH\|f\|_{L^p(\Omega)}
\end{equation}
for any $p\geq 2$.

Next we consider the farfield part. Let $d_i^{far}:=u_i^{far}-\hat{u}_i^{far}$ and set
\begin{equation*}
v^{far}:=\sum_{i=1}^Nb_id_i^{far}.
\end{equation*}

Lemma \ref{lemma_v_near_far} yields
\begin{equation*}
\|\nabla v^{far}\|_{L^2(\Omega)}^2\leq 2M_0\sum\limits_{i=1}^N\left(\|\nabla d_i^{far}\|_{L^2(\omega_i)}^2+\frac{C^2}{H_i^2}\|d_i^{far}\|_{L^2(\omega_i)}^2\right).
\end{equation*}

Due to Lemma \ref{lemma_est_d_far} we finally get with a constant $C$ depending on the mesh regularity
\begin{equation}\label{nabla_vfar}
\|\nabla v^{far}\|_{L^2(\Omega)}\leq C\sqrt{\sum_{i=1}^N\left(H_i^4+h_i^2\right)\|g_i^{far}\|_{L^2(\omega_i^{far})}^2}.
\end{equation}

By the definition of $g_i^{far}$ (cf.\ \eqref{g_i_far}) we have
\enlargethispage{-3\baselineskip} % quirk fuer formel, damit abschaetzung auf gleicher seite ist
\begin{IEEEeqnarray}{rCl}\label{gifar}
\|g_i^{far}\|_{L^2(\omega_i^{far})}&\leq & \|\chi_if\|_{L^2(\omega_i^{far})}+2\|A\nabla \chi_i\nabla u\|_{L^2(\omega_i^{far})}
\nonumber\\
&&+\|(u-\bar{u}_i)\Div(A\nabla\chi_i)\|_{L^2(\omega_i^{far})}.
\end{IEEEeqnarray}

Applying general H\"{o}lder's  inequality on the first term of \eqref{gifar} and by \eqref{cutoff_estimate_1} we obtain for any $2< q< Q/3$ and any $p\in(2Q/(Q-6),P]$ such that $2/q+2/p=1$ the estimate
\begin{align}\label{first_term_g_i}
\|\chi_if\|_{L^2(\omega_i^{far})}&\leq\|\chi_i\|_{L^{q}(\omega_i^{far})}\|f\|_{L^{p}(\omega_i^{far})}\nonumber\\
&\leq CH_i^{\frac{d}{q}}\|f\|_{L^{p}(\omega_i^{far})}\nonumber\\
&=CH_i^{\frac{d}{2}-\frac{d}{p}}\|f\|_{L^{p}(\omega_i^{far})}.
\end{align}

To get an estimate of the second term of \eqref{gifar} we use general H\"{o}lder's inequality, \eqref{coeff} and \eqref{cutoff_estimate_2}. For $2< q< Q/3$ and any $p\in(2Q/(Q-6),P]$ such that $2/q+2/p=1$ it holds
\begin{align}\label{second_term_g_i}
\|A\nabla\chi_i\nabla u\|_{L^2(\omega_i^{far})}&\leq \|A\|_{L^\infty(\omega_i^{far})}\|\nabla\chi_i\|_{L^{q}(\omega_i^{far})}\|\nabla u\|_{L^{p}(\omega_i^{far})}\nonumber\\
&\leq C\beta H_i^{\frac{d}{2}-\frac{d}{p}-1}\|\nabla u\|_{L^{p}(\omega_i^{far})}.
\end{align}

For the third term of  \eqref{gifar} we obtain by general H\"{o}lder's inequality, using \eqref{cutoff_estimate_3} and by Poincar\'{e}'s inequality for $2< q< Q/3$ and any $p\in(2Q/(Q-6),P]$ such that $2/q+2/p=1$
\begin{align}\label{third_term_g_i}
\|(u-\bar{u}_i)\Div(A\nabla\chi_i)\|_{L^2(\omega_i^{far})}&\leq \|\Div(A\nabla\chi_i)\|_{L^{q}(\omega_i^{far})}\|u-\bar{u}_i\|_{L^p(\omega_i^{far})}\nonumber\\
&\leq CH_i^{\frac{d}{2}-\frac{d}{p}-1}\|\nabla u\|_{L^{p}(\omega_i^{far})}.
\end{align}

Next, we want to estimate the square root of $\sum_{i=1}^N (H_i^4+h_i^2)\|\chi_i f\|_{L^2(\omega_i^{far})}^2$. For this we set $\gamma_i:=(H_i^4+h_i^2)H_i^{d-2d/p}$ and $\delta_i:=\|f\|_{L^{p}(\omega_i^{far})}^2$. By \eqref{first_term_g_i} we get

\begin{align*}
\sqrt{\sum\limits_{i=1}^N (H_i^4+h_i^2)\|\chi_i f\|_{L^2(\omega_i^{far})}^2}&\leq C\sqrt{\sum\limits_{i=1}^N(H_i^4+h_i^2)H_i^{d-\frac{2d}{p}}\|f\|_{L^{p}(\omega_i^{far})}^2}\\
&= C\sqrt{\sum\limits_{i=1}^N \gamma_i\delta_i}.
\end{align*}

Applying a discrete H\"{o}lder's inequality with $r:=p/2$ and $r'=p/(p-2)$ yields
\begin{align*}
\sqrt{\sum\limits_{i=1}^N (H_i^4+ h_i^2)\|\chi_i f\|_{L^2(\omega_i^{far})}^2}&\leq C \sqrt{\|\gamma_i\|_{\ell^{r'}} \|\delta_i\|_{\ell^r}}\\
&=C\sqrt{\left(\sum\limits_{i=1}^N \gamma_i^{r'}\right)^{\frac{1}{{r'}}}\left(\sum\limits_{i=1}^N \delta_i^{r}\right)^{\frac{1}{r}}}\\
&=C\left(\sum_{i=1}^N (H_i^4+h_i^2)^{\frac{p}{p-2}}H_i^{d}\right)^{\frac{p-2}{2p}}\left(\sum\limits_{i=1}^N\|f\|_{L^p(\omega_i^{far})}^{p}\right)^{\frac{1}{p}}.
\end{align*}

Since $\omega_i^{far}\subset \omega_{i,2}$ and every simplex $\tau$ is contained in at most $M_2$ domains $\omega_{i,2}$ (cf.\ \eqref{overlap_const}) we obtain
\begin{align}\label{first_term_sum}
\sqrt{\sum\limits_{i=1}^N (H_i^4+h_i^2)\|\chi_i f\|_{L^2(\omega_i^{far})}^2}&\leq C\left(N \max_{1\leq i\leq N}(H_i^4+h_i^2)^{\frac{p}{p-2}}H_i^{d}\right)^{\frac{p-2}{2p}} \left(\sum\limits_{i=1}^N\|f\|_{L^{p}(\omega_{i,2})}^{p}\right)^{\frac{1}{p}}\nonumber\\
&\leq C (H^2+h)\|f\|_{L^{p}(\Omega)}.
\end{align}
The last inequality follows due to the assumption that  $N\leq CH^{-d}$.

Now, we want to estimate the square root of $\sum_{i=1}^N (H_i^4+h_i^2)\|A\nabla\chi_i\nabla u\|_{L^2(\omega_i^{far})}^2$ in a similar way.
By \eqref{second_term_g_i} and using a discrete H\"{o}lder's inequality with $r, r', \gamma_i$ as before and $\delta_i:=H_i^{-2}\|\nabla u\|_{L^{p}(\omega_i^{far})}^2$ we get 
\begin{align*}
\sqrt{\sum\limits_{i=1}^N (H_i^4+h_i^2)\|A\nabla\chi_i\nabla u\|_{L^2(\omega_i^{far})}^2}&\leq C\beta\sqrt{\sum\limits_{i=1}^N(H_i^4+h_i^2)H_i^{d-\frac{2d}{p}-2}\|\nabla u\|_{L^{p}(\omega_i^{far})}^2} \\
&\leq C\beta \sqrt{\|\gamma_i\|_{\ell^{r'}} \|\delta_i\|_{\ell^r}}\\
&\leq C\beta (H^2+h)\left(\sum\limits_{i=1}^N H_i^{-p}\|\nabla u\|_{L^{p}(\omega_i^{far})}^{p}\right)^{\frac{1}{p}}\\
&\leq C\beta (H^2+h)\left(\max_{1\leq i\leq N}H_i^{-p}\sum\limits_{i=1}^N\|\nabla u\|_{L^{p}(\omega_{i,2})}^{p}\right)^{\frac{1}{p}}\\
&\leq C\beta \left(H+\frac{h}{H}\right)\|\nabla u\|_{L^{p}(\Omega)}.
\end{align*}

By Theorem \ref{theo_gradient_estimate} we obtain\footnote{Note that \begin{align*}
\left\|F\right\|_{W^{-1,p}(\Omega)}&=\sup_{\substack{v\in W_0^{1,p'}(\Omega)\\ \|v\|W^{1,p'}(\Omega) \leq 1}}\left|\int_\Omega\! fv\right|
\leq \sup_{\substack{v\in W_0^{1,p'}(\Omega)\\ \|v\|W^{1,p'}(\Omega)\leq 1}}\left\|f\right\|_{L^p(\Omega)}\left\|v\right\|_{L^{p'}(\Omega)}
\leq \left\|f\right\|_{L^p(\Omega)}.
\end{align*}}
\enlargethispage{-3\baselineskip}
\begin{align}\label{second_term_sum}
\sqrt{\sum\limits_{i=1}^N \left(H_i^4+h_i^2\right)\|A\nabla\chi_i\nabla u\|_{L^2(\omega_i^{far})}^2}&\leq C\beta \left(H+ \frac{h}{H}\right)\|F\|_{W^{-1,p}(\Omega)}\nonumber\\
&\leq C\beta \left(H+\frac{h}{H}\right)\|f\|_{L^{p}(\Omega)}.
\end{align}

Estimate \eqref{third_term_g_i} and the same arguments as above yield
\begin{align}\label{third_term_sum}
\sqrt{\sum\limits_{i=1}^N (H_i^4+h_i^2) \|(u-\bar{u}_i)\Div(A\nabla\chi_i)\|_{L^2(\omega_i^{far})}^2}&\leq C \sqrt{\sum\limits_{i=1}^N(H_i^4+h_i^2)H_i^{d-\frac{2d}{p}-2}\|\nabla u\|_{L^{p}(\omega_i^{far})}^2}\nonumber\\
&\leq C \left(H+\frac{h}{H}\right)\|f\|_{L^{p}(\Omega)}.
\end{align}

The combination of \eqref{nabla_vfar}, \eqref{gifar}, \eqref{first_term_sum}, \eqref{second_term_sum} and \eqref{third_term_sum} yields
\begin{equation*}
\|\nabla v^{far}\|_{L^2(\Omega)}\leq C\left(H^2+H+h+\frac{h}{H}\right)\|f\|_{L^{p}(\Omega)}
\end{equation*}
for any $p\in(2Q/(Q-6),P]$ such that $2/q+2/p=1$ for some $2<q< Q/3$. The constant $C$ depends on $\alpha$, $\beta$ and $p$.
The small mesh size $h$ arises by $t$-fold refinement of the local coarse grid so that 
\begin{equation}\label{choice_t_2d}
h\leq CH2^{-t}. 
\end{equation}
By choosing 
\begin{equation*}
t=\left\lceil\lb\frac{1}{H}\right\rceil
\end{equation*}
in \eqref{choice_t_2d} $h$ satisfies
\begin{equation*}
h\leq CH^2.
\end{equation*}
Thus it holds
\begin{equation}\label{estimate_farfield}
\|\nabla v^{far}\|_{L^2(\Omega)}\leq C\left(H^2+H+h+\frac{h}{H}\right)\|f\|_{L^{p}(\Omega)}\leq CH\|f\|_{L^{p}(\Omega)}.
\end{equation}

The combination of \eqref{Galerkin_ortho}, \eqref{estimate_nearfield} and \eqref{estimate_farfield} leads to
\begin{align*}
\|A^{1/2}\nabla(u-u_{AL}^{GAL})\|_{L^2(\Omega)}&\leq \|A^{1/2}\nabla(u-u_{AL})\|_{L^2(\Omega)}\\
&=\|A^{1/2}(\nabla v^{near}+\nabla v^{far})\|_{L^2(\Omega)}\\
&\leq \|A^{1/2}\|_{L^\infty(\Omega)}\left(\|\nabla v^{near}\|_{L^2(\Omega)}+\|\nabla v^{far}\|_{L^2(\Omega)}\right)\\
&\leq \sqrt{\beta} C H \|f\|_{L^p(\Omega)}
\end{align*}

which finally proves estimate \eqref{main_estimate}. Estimate \eqref{dimension_tilde_VAL-II} can be seen as follows: From the definition of $V_{AL}$ (cf.\ \eqref{def:VAL-II}) it is clear that 
\begin{equation*}
\dim V_{AL}\leq N(\dim V_i^{near}+\dim V_i^{far})
\end{equation*}
holds. The choice  $k:=\left\lceil\frac{2c_0\ell^2}{(\ell-1)}\right\rceil$ yields
\begin{align*}
k&\leq \frac{2c_0\ell^2}{\ell -1}+1\\
&=\frac{2c_0\ell(\ell -1)}{\ell-1}+\frac{2c_0(\ell-1)}{\ell-1}+\frac{2c_0}{\ell-1}+1\\
&=2c_0\ell+2c_0+\frac{2c_0}{\ell-1}+1\\
&\leq 2c_0\ell+4c_0+1\\
&\leq \ell\left(4c_0+\frac{1}{2}\right).
\end{align*}
In the last inequality we used that $\ell\geq 2$. Remark \ref{remdim} and the above computation show that 
\begin{equation*}
\dim V_i^{far}\leq \ell k^d\leq \left(4c_0+\frac{1}{2}\right)^d\ell^{d+1}.
\end{equation*}
Obviously we have $\dim V_i^{near}=O(1)$. Hence,
\begin{equation*}
\dim V_{AL}\leq CN\ell^{2}\leq CH^{-d}\log^{d+1}\frac{1}{H}.
\end{equation*}
The last inequality follows by the assumption that there exists a constant $C>0$ such that $N\leq CH^{-d}$ and the choice of $\ell$.

\end{proof}

\bibliographystyle{plain}
\bibliography{mybib}{}

\section*{Acknowledgment}
I would like to thank Prof. Dr. Stefan Sauter for many interesting and helpful discussions. 

\end{document}